\numberwithin{equation}{section}
\theoremstyle{plain}
\newtheorem{theo}{Theorem}
\newtheorem*{mainthm*}{Main Theorem}
\theoremstyle{definition}
\newtheorem{defi}{Definition}[section]
\newtheorem{lemm}{Lemma}[section]
\newtheorem{coro}{Corollary}[section]
\newtheorem{fact}{Fact}[section]
\theoremstyle{remark}
\newtheorem{rema}{Remark}[section]
\begin{document}

\title[On discrete constant principal curvature surfaces]{On discrete constant principal curvature surfaces}

\author{Yutaro Kabata}
\address{School of Information and Data Sciences, Nagasaki University,
Bunkyocho 1-14, Nagasaki, 852-8131, Japan}
\email{kabata@nagasaki-u.ac.jp}

\author{Shigeki Matsutani}
\address{Faculty of Electrical, Information and Communication Engineering,
 Institute of Science and Engineering, Kanazawa University,
Kakuma, Kanazawa, 920-1192, Japan}
\email{s-matsutani@se.kanazawa-u.ac.jp}
 
\author{Yuta Ogata}
\address{
Department of Mathematics, Faculty of Science, Kyoto Sangyo University,
Motoyama, Kamigamo, Kita-ku, Kyoto-City, 603-8555, Japan}
\email{yogata@cc.kyoto-su.ac.jp} 

\keywords{discrete surface; constant principal curvature; constant bond length.}
\subjclass{%
Primary 53A70, Secondary 74A30}

\begin{abstract}
Recently, it is discovered that a certain class of nanocarbon materials has geometrical properties related to the discrete geometry, pre-constant discrete principal curvature \cite{KMNOO(??)} based on the discrete surface theory proposed on trivalent graphs by Kotani, Naito and Omori \cite{KNO(2017)}. In this paper, with the aim of an application to the nanocarbon materials, 
we will study discrete constant principal curvature (CPC) surfaces. Firstly, we developed the discrete surface theory on a full 3-ary oriented tree so that we define a discrete analogue of principal directions on them to investigate it. We also construct some interesting examples of discrete constant principal curvature surfaces, including discrete CPC tori.
\end{abstract}
\maketitle
\section{Introduction}
\quad\ The discrete differential geometry has been studied for three decades.
We note that there are several approaches to study discrete surfaces in different setups, from the viewpoints of the integrable system, material science, computer vision, and other applications.
In this paper, we develop the theory of discrete surfaces in $\mathbb{R}^3$ on a trivalent graph given by Kotani, Naito, and Omori \cite{KNO(2017)} to provide a geometrical basis for a geometrical analysis of a nanocarbon system, which was discovered and studied by Onoe \cite{OnoeNakayamaAonoHara(2003), ShimaYoshiokaOnoe(2009)} and whose form from the 
first-principles calculations was studied by Noda \cite{NodaOnoOhno(2015)}.
The geomtery of the nanocarbon systems, e.g., the fullenences, has been concerned in the material science. 
As in \cite{Terrones(1994), TerronesMackay(1994)}, Terrones and Mackay constructed a carbon network that is related to a discrete Schwarzian minimal surface, which is well-known as the typical example of triply periodic minimal surfaces. 
After their great works, many researchers currently are interested in the relation between nanomaterials science and mathematics (see \cite{NodaOnoOhno(2015),  OnoeNakayamaAonoHara(2003), ShimaYoshiokaOnoe(2009), Vassilev(2013)} also). 

Onoe \cite{OnoeNakayamaAonoHara(2003), ShimaYoshiokaOnoe(2009)} discovered an interesting nanocarbon materials, which is called peanut-shaped fullerene polymer (PSFP) or is simply referred to as a C${}_{60}$ polymer. 
The authors in this paper with Onoe and Noda \cite{KMNOO(??)} found that a certain class of the nanocarbon system, including C${}_{60}$, carbon nanotube, and the C${}_{60}$ polymer, has a novel discrete geometrical nature, i.e., the  discrete pre-constant principal curvature surfaces; some of them has the discrete constant principal curvature (CPC).
In \cite{KMNOO(??)}, it is revealed that the property is connected with the stability of the nanocarbon systems as a material science aspect.
However the geometrical property also shows the novel aspect of the discrete geometry. 
In other words, it motivates us to study discrete surfaces on a full 3-ary oriented tree, a discrete analogue of principal directions on them, and the property of constant bond length, etc (see Theorem \ref{theo:1} and \ref{theo:2} for detail).
In this paper and its sequel paper \cite{KMNOO(??)}, we show novel geometrical properties of the full 3-valent oriented graphs by introducing the discrete principal curvature and its line based on the theory by Kotani, Naito and Omori \cite{KNO(2017)}.

 Here we mention different two approaches on the discrete differential geometry in the previous works. Firstly, in \cite{BobenkoPinkall(1996), BobenkoPinkall(1999)}, Bobenko and Pinkall established a theory of discrete surfaces via quadrilaterals, using the technique of the integrable system. They considered the discretization of a special class of surfaces, called isothermic surfaces, and characterize discrete isothermic surfaces by cross ratios. Isothermic surfaces include famous classes of surfaces, for example minimal surfaces, CMC surfaces, surfaces of revolution, and so on. In \cite{BobenkoPinkall(1996), BobenkoPinkall(1999)}, they also defined the discrete mean and Gauss curvatures on the discrete surfaces via quadrilaterals by using the parallel meshes. They also gave a recipe of discrete isothermic minimal surfaces by using discrete holomorphic functions (i.e. isothermic map between complex planes $\mathbb{C}$), called a discrete Weierstrass representation. Even for the discrete CMC surfaces, recently there are several results and generalization in \cite{Tim(1999), HoffmannKobayashiYe(2022)}, etc. Especially, 
in \cite{Muller(2014), MullerWallner(2010)}, there are some studies related to the discrete surface theory via planar hexagonal meshes, by M{\"u}ller and Wallner. 
In the present paper, we will treat a full $3$-ary oriented tree, which may represent such hexagonal meshes as a particular case without the condition of planar; the state of planar is not necessary. See Figures \ref{fig:standardArmchair}, \ref{fig:KNOtube}, \ref{fig:Chiral}, \ref{fig:CPCtorus} and \ref{fig:FP5N} also.   

Secondly, we show the discrete surface theory related to triangles. 
In \cite{PinkallPolthier(1993)}, Pinkall and Polthier defined the mean curvature on the discrete surfaces with triangular meshes, and considered the variation principle for the area and the energy of simplicial surfaces with triangulations. There are many related works as in \cite{Hildebrandt(2006), Lam(2018), Polthier(2005)}, etc. Similarly, for discrete CMC surfaces with triangular meshes, Oberknapp, Polthier and Rossman studied in the scene of the variational approaches, in \cite{Oberknapp(1997), PolthierRossman(2002)}. Recently, in \cite{KNO(2017)} Kotani, Naito and Omori established a new approach to study discrete surfaces on a $3$-valent graph. They defined the mean and Gaussian curvatures for the discrete surfaces on  
a $3$-valent graph, which have the triangular tangent plane. Their motivation is same as this paper, and they considered the application for atomic configurations of materials. However, their domain was a ``$3$-valent graph'' and they assumed their discrete surface as an ``embedding''. It contrasts with our situation on the full $3$-ary oriented tree.     

In this paper, firstly we will study discrete surfaces in $\mathbb{R}^3$ on a full $3$-ary oriented tree, and our theory extends the results in \cite{KNO(2017)} from a $3$-valent graph to a full $3$-ary oriented tree. Moreover, our geometric object is a map including an ``immersion'' which differs from an embedding in \cite{KNO(2017)}. Secondly, we will define a discrete analogue of principal directions, called {\it discrete principal directions}. Using this notion, we study discrete constant principal curvature (CPC)  surfaces, and construct some examples (discrete round cylinders and CPC tori). 

We will show the structure of the paper. In Section \ref{sec:dicrete surface}, we define the discrete surfaces as maps of a full $3$-ary oriented tree to $\mathbb{R}^3$. In Remark \ref{rema:covering}, we also consider the difference between our definition and one of \cite{KNO(2017)}, and geometric meaning for them. In Section \ref{sec:principal}, we define the {\it discrete principal directions} as a discrete analogue of the principal directions. Theorem \ref{theo:1} implies geometric meanings of the discrete principal directions, and we can get the center of curvature spheres on the discrete surfaces, and characterization of discrete CPC surfaces, via the parallel transformations. By Theorem \ref{theo:2}, the existence of discrete surfaces in the both discrete principal directions, are guaranteed, even if we assume the strong restriction that they should have constant bond length. In Section \ref{sec:examples}, we introduce some examples of discrete surfaces, which has the property of the discrete principal directions (see Theorem \ref{theo:3} and \ref{theo:4}). In Section \ref{sec:future}, we discuss our future directions as a next project. In this paper, we treat ``constant bond length'', but for general we may study about the case of non-constant bond length. However, there already have existed the complicated examples of CPC surfaces with non-constant bond length in our previous paper \cite{KMNOO(??)}, and they have a quite different geometric property from smooth cases, related to the {\it center-axisoid} (see Remark \ref{rema:CPCsmooth} in this paper, and \cite{KMNOO(??)} for detail).

\section{Review of constant principal curvature surfaces}\label{sec:smooth}
In this section, we review the classical theory of constant principal curvature (CPC) surfaces  in $\mathbb{R}^3$. They are also called the tubular surfaces, pipe surfaces or canal surface, as introduced in \cite{Anciaux(2015), Garcia(2006), Hilbert(1952)}, etc. Let $\Sigma$ be a simply connected domain, and let $f:\Sigma \to \mathbb{R}^3$ be a regular surface with curvature line coordinates $(u,v)$ as follows:
$$\langle f_u, f_v\rangle=0,\  \nu_u=-k_1f_u\ \textup{and}\ \nu_v=-k_2f_v$$
for the standard Euclidean inner product $\langle \cdot, \cdot \rangle: \mathbb{R}^3\times\mathbb{R}^3\to \mathbb{R}$, the unit normal vector $\nu\in \mathbb{S}^2$, and the principal curvatures $k_1$, $k_2$. Then, we have the following classical fact.
\begin{fact}[CPC surfaces]
An umbilic-free regular surface $f:\Sigma \to \mathbb{R}^3$ with curvature line coordinates $(u,v)$ has constant principal curvature $k_1\neq 0$ if and only if 
\begin{align}
f(u,v)=\gamma(v)-\frac{1}{k_1}\left(\cos \theta(u) b_1(v)+\sin \theta(u) b_2(v)\right),\label{eq:smoothCPC}
\end{align}
where $\theta(u)$ is a function in $u$, $\gamma(v)$ is a regular space curve, and $b_1(v), b_2(v)$ are orthonomal basis in the normal space of $\gamma(t)$, i.e. $b_1(v), b_2(v)\in (\gamma_v(v))^\perp$. For the regularity of $f$ of the form \eqref{eq:smoothCPC}, i.e. $f$ does not have singularities on $\Sigma$, $\gamma$ should satisfy 
$$\theta_u\cdot ||\gamma_v-\frac{1}{k_1}\left(\cos \theta (b_1)_v+\sin \theta (b_2)_v\right)||\neq 0,$$ 
where $||\cdot||$ means the standard Euclidean norm in $\mathbb{R}^3$. 
\end{fact}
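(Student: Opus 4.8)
The plan is to prove the two implications separately: in the forward direction I would realise the curve $\gamma$ in \eqref{eq:smoothCPC} as the locus of centres of the curvature spheres of radius $1/k_1$, and in the backward direction verify the curvature-line and CPC properties by a direct frame computation. For the ``only if'' part, assume $f$ has curvature line coordinates $(u,v)$ and $k_1$ is constant. Since $\nu_u = -k_1 f_u$ with $k_1$ constant, the field $f + \tfrac{1}{k_1}\nu$ has vanishing $u$-derivative, so $\gamma(v) := f(u,v) + \tfrac{1}{k_1}\nu(u,v)$ depends only on $v$ and $f = \gamma - \tfrac{1}{k_1}\nu$. Differentiating in $v$ gives $\gamma_v = (1 - k_2/k_1) f_v$, a nonzero multiple of $f_v$ by regularity and the umbilic-free hypothesis, so $\gamma$ is a regular space curve; and since $\langle \nu, f_v\rangle = 0$ one gets $\nu(u,v) \in (\gamma_v(v))^\perp$ for all $u$. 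Fixing an orthonormal frame $b_1(v), b_2(v)$ of $(\gamma_v(v))^\perp$, we may write $\nu(u,v) = \cos\phi(u,v)\, b_1(v) + \sin\phi(u,v)\, b_2(v)$ for some angle function $\phi$.

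The remaining task in this direction is to arrange that $\phi$ depends on $u$ alone. I would expand $\nu_v$ in the orthonormal frame $\{\gamma_v/\|\gamma_v\|,\, b_1,\, b_2\}$ and use $\nu_v = -k_2 f_v \parallel \gamma_v$: the $b_1$- and $b_2$-components of $\nu_v$ then must vanish, which forces $\phi_v = -\langle (b_1)_v, b_2\rangle$, a function of $v$ only. Hence $\phi_{uv} = 0$, so $\phi(u,v) = \theta(u) + \alpha(v)$, and rotating the frame by $\alpha(v)$ brings $f$ into the form \eqref{eq:smoothCPC}, the rotated frame now satisfying $\langle (\tilde b_1)_v, \tilde b_2\rangle = 0$. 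The stated regularity constraint is exactly the non-vanishing of $\theta_u$ (equivalent to $\nu_u = -k_1 f_u \neq 0$) together with $\|f_v\| \neq 0$.

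For the ``if'' direction, take $f$ as in \eqref{eq:smoothCPC} and set $\nu := \cos\theta(u)\, b_1(v) + \sin\theta(u)\, b_2(v)$. Using that $\nu$ is a unit vector orthogonal to $\gamma_v$, to $\nu_u$, and to $\nu_v$, one checks $\langle \nu, f_u\rangle = \langle \nu, f_v\rangle = 0$, so $\nu$ is the Gauss map; then $f_u = -\tfrac{1}{k_1}\nu_u$ shows at once that $\partial_u$ is a principal direction with constant principal curvature $k_1$. The orthogonality $\langle f_u, f_v\rangle = 0$ reduces to $\theta_u\langle (b_1)_v, b_2\rangle = 0$, so $(u,v)$ are genuine curvature line coordinates precisely when $b_1, b_2$ is a parallel (Bishop-type) normal frame along $\gamma$, which is the normalisation produced by the first part; the regularity condition makes $f$ an immersion. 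Geometrically, \eqref{eq:smoothCPC} exhibits $f$ as the pipe surface of radius $1/|k_1|$ about $\gamma$.

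The step I expect to be the main obstacle is the separation $\phi(u,v) = \theta(u) + \alpha(v)$ of the angle function: this is the one place where the orthogonality of the coordinate net enters essentially (through $\nu_v \parallel \gamma_v$), and on the converse side it reappears as the requirement that the normal frame $b_1, b_2$ be parallel along $\gamma$. It is also worth keeping track of where the umbilic-free hypothesis is used, namely to guarantee $\gamma_v \neq 0$, so that $(\gamma_v)^\perp$ and the frame $b_1, b_2$ make sense.
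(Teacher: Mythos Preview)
Your argument follows the same route as the paper's: integrate $\nu_u=-k_1f_u$ to define $\gamma(v)=f+\tfrac{1}{k_1}\nu$, then use $\nu_v=-k_2f_v$ and the umbilic-free hypothesis to see that $\gamma$ is regular and $\nu\in(\gamma_v)^\perp$. The paper stops there, declaring the converse ``trivial'' and simply asserting \eqref{eq:smoothCPC} without explaining why the angle can be taken to depend on $u$ alone. Your $\phi_{uv}=0$ computation and the subsequent frame rotation supply precisely this missing step, and your remark in the ``if'' direction---that $\langle f_u,f_v\rangle=0$ forces $\langle(b_1)_v,b_2\rangle=0$, i.e.\ $b_1,b_2$ must be a parallel (Bishop) normal frame---identifies an implicit normalisation that the paper's statement leaves unsaid. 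So the approach is the same, but your version is the more complete of the two.
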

\begin{proof}
The necessary condition is trivial. We will show the sufficient condition. We assume $f$ is a CPC surface. Then, the ODE $\nu_u=-k_1f_u$ implies that $f(u,v)=\gamma(v)-\frac{1}{k_1}\nu(u,v)$ for a space curve $\gamma$. Using $\nu_v=-k_2f_v$, we have $\gamma_v=\left(1-\frac{k_2}{k_1}\right)f_v$. If $\gamma_v=0$, then $k_1=k_2$ or $f_v=0$. However, by assumptions, both cases do not occur, and thus $\gamma$ should be regular. Moreover, the fact $\gamma_v$ is parallel to $f_v$ implies $\nu$ is included in the span of $b_1$ and $b_2$. Consequently, we have \eqref{eq:smoothCPC}. It is also easy to show that the regularity conditions $||f_u||\neq0$ and $||f_v||\neq0$ equal to the above condition.
\end{proof}
\begin{coro}
Let $f$ be an umbilic-free regular surface with curvature line coordinates $(u,v)$ having constant principal curvature $k_1\neq 0$, given by a space curve $\gamma$. If $\gamma$ is a line, then $f$ becomes a round cylinder.
\end{coro}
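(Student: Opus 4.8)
The plan is to specialize the representation formula \eqref{eq:smoothCPC} to the case where $\gamma$ is a straight line and show that the resulting surface is (a piece of) a round cylinder. First I would parametrize the line as $\gamma(v) = p + v\, e$ for a fixed point $p \in \mathbb{R}^3$ and a fixed unit vector $e \in \mathbb{S}^2$, so that $\gamma_v(v) = e$ is constant. Then the normal space $(\gamma_v(v))^\perp = e^\perp$ is the same two-dimensional subspace for every $v$, and I may simply choose the orthonormal frame $b_1(v), b_2(v)$ to be \emph{constant} vectors $b_1, b_2$ spanning $e^\perp$ (this is a legitimate choice of orthonormal basis of the normal space). With these choices the formula becomes
\begin{align}
f(u,v) = p + v\, e - \frac{1}{k_1}\bigl(\cos\theta(u)\, b_1 + \sin\theta(u)\, b_2\bigr),\label{eq:cylinderproof}
\end{align}
which is manifestly the parametrization of a circular cylinder of radius $1/|k_1|$ whose axis is the line through $p$ in direction $e$.

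Next I would verify that \eqref{eq:cylinderproof} is indeed a round cylinder in the strict sense: the $v$-curves are straight rulings parallel to $e$, and the $u$-curves are circles of radius $1/|k_1|$ lying in planes orthogonal to $e$ (centered on the axis), since $\cos\theta(u)\, b_1 + \sin\theta(u)\, b_2$ traces the unit circle in $e^\perp$. One should also check the regularity hypothesis is compatible: the condition from the Fact reduces, with $(b_1)_v = (b_2)_v = 0$, to $\theta_u \cdot \|e\| = \theta_u \neq 0$, which is exactly the statement that the profile circle is traversed immersively; this is guaranteed by the hypothesis that $f$ is a regular surface. Finally, the principal curvatures of this cylinder are $k_1$ in the circular direction and $0$ in the ruling direction, consistent with the umbilic-free and constant-principal-curvature assumptions.

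The only genuinely delicate point is the freedom in choosing $b_1(v), b_2(v)$: the Fact allows any $v$-dependent orthonormal frame of $e^\perp$, so a priori $f$ could look like a "twisted" family of circles rather than a cylinder. I would address this by noting that changing the frame $(b_1(v), b_2(v))$ by a $v$-dependent rotation angle $\phi(v)$ can be absorbed into a reparametrization $\theta(u) \mapsto \theta(u) - \phi(v)$ only if $\phi$ is constant — but here one does not need to absorb it: since the circle $\{\cos t\, b_1(v) + \sin t\, b_2(v) : t \in \mathbb{R}\}$ equals the \emph{same} circle of radius $1/|k_1|$ in $e^\perp$ regardless of the rotation, the image $f(\Sigma)$ is contained in the fixed cylinder no matter which frame is used. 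Hence the rotational ambiguity affects only the parametrization, not the surface, and the conclusion follows. I expect this frame-independence observation to be the main thing worth spelling out; everything else is a direct substitution into \eqref{eq:smoothCPC}.
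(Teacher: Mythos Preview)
The paper states this corollary without proof, treating it as an immediate consequence of the preceding Fact. Your argument is correct and supplies exactly the details one would expect: specialize \eqref{eq:smoothCPC} to a straight line $\gamma$ and observe that the image lies on a round cylinder of radius $1/|k_1|$ about that line.

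Your identification of the frame issue is the right thing to worry about, and your resolution is the clean one: regardless of how $b_1(v),b_2(v)$ rotate within $e^\perp$, the point $\cos\theta(u)\,b_1(v)+\sin\theta(u)\,b_2(v)$ is a unit vector in the fixed plane $e^\perp$, so $f(u,v)$ lies at distance $1/|k_1|$ from the axis and hence on the cylinder. Two minor remarks: (i) since $(u,v)$ are fixed curvature-line coordinates you cannot literally reparametrize $\gamma$ as $p+ve$, but writing $\gamma(v)=p+g(v)\,e$ with $g'\neq 0$ changes nothing in your image argument; (ii) your discussion of the $v$-curves being straight rulings uses the constant-frame choice, which you are not entitled to impose a priori---but as you correctly note at the end, only the image statement is needed, so this is harmless.
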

In sections $3$ and $4$, we will study the discrete version of CPC surfaces. In the section \ref{sec:examples}, we also construct some concrete examples of discrete CPC surfaces.    

\section{Discrete surfaces on a full $3$-ary oriented tree}\label{sec:dicrete surface}
In this section, we extend the definition of discrete surfaces in $\mathbb{R}^3$ as in \cite{KNO(2017), KotaniNaitoTao(2022)}, which are embeddings of a trivalent oriented graph. We will consider a map of a full $3$-ary (ternary) oriented tree which is defined as a rooted tree in which each node has either $0$ or $3$  children.

\begin{defi}[c.f.\cite{KNO(2017)}]
Let $X=(V_X, E_X)$ be a full $3$-ary oriented tree in $\mathbb{R}^2$; $V_X$ is a set of vertices (root, node and leaf) and $E_X$ is a set of oriented edges. For a root or node $x\in V_X$, $x$ has three edges $E_x=\{e_1, e_2, e_3\}\subset E_X$.
\quad A map $\iota: X \to \mathbb{R}^3$ is said to be a {\it discrete surface} if 
at least two vectors in $\{\iota(e)\ |\ e\in E_x\}$ are linearly independent for each $x\in V$, and 
$\iota(x)$ is locally oriented, that is, the order of the three edges is assumed to be assigned to each vertex of $X$ (see Figure \ref{fig:iota}).
\begin{figure}[H]
\begin{center}
\includegraphics[width=0.75\linewidth]{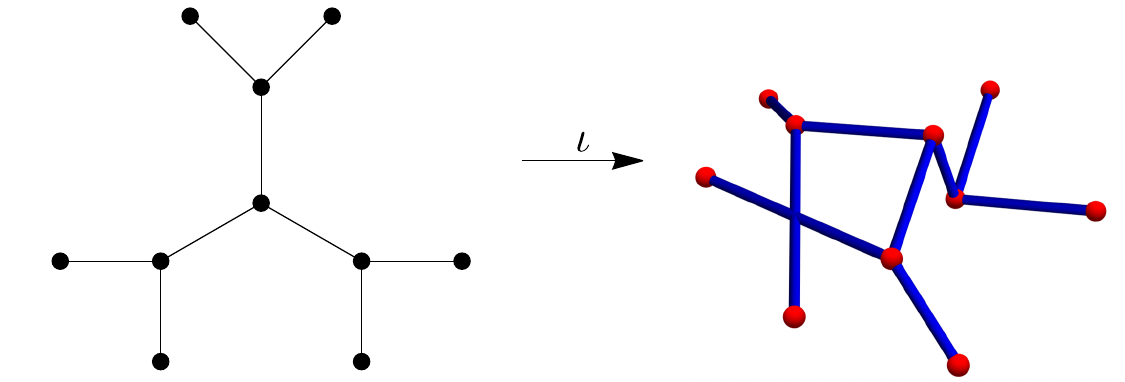}
\end{center}
\caption{Definition of $\iota(X)$.}\label{fig:iota}
\end{figure}
\end{defi}
\begin{rema}\label{rema:covering}
\begin{itemize}
\item[(1)] We refer to the map $\iota(x)$ constructing a discrete surface as an "immersion."
\item[(2)]  In the above definition, we remark that ours differ from \cite{KNO(2017)}. We replace an embedding $\iota(X)$ in \cite{KNO(2017)} with a general map, and we employ the full $3$-ary oriented trees rather than trivalent oriented graphs as in \cite{KNO(2017)}. We can regard a full $3$-ary oriented tree $X$ as a multiple cover of a trivalent graph $\tilde{X}$, i.e. a covering map $c:X\to \tilde{X}$ exists. However, even for a map $\iota:X \to \mathbb{R}^3$, it is not guaranteed that there is an embedding $\tilde{\iota}:\tilde{X} \to \mathbb{R}^3$ such that $\iota=\tilde{\iota}\circ c: X\to \mathbb{R}^3$. For later convenience, we employ the definition (see Figure \ref{fig:covering}). 
\item[(3)] We also remark that we can find a subgraph $X$ in a trivalent connected graph or its covering trivalent graph so that it is a full $3$-ary oriented tree.
\end{itemize}
\begin{figure}[H]
\begin{center}
\includegraphics[width=0.6\linewidth]{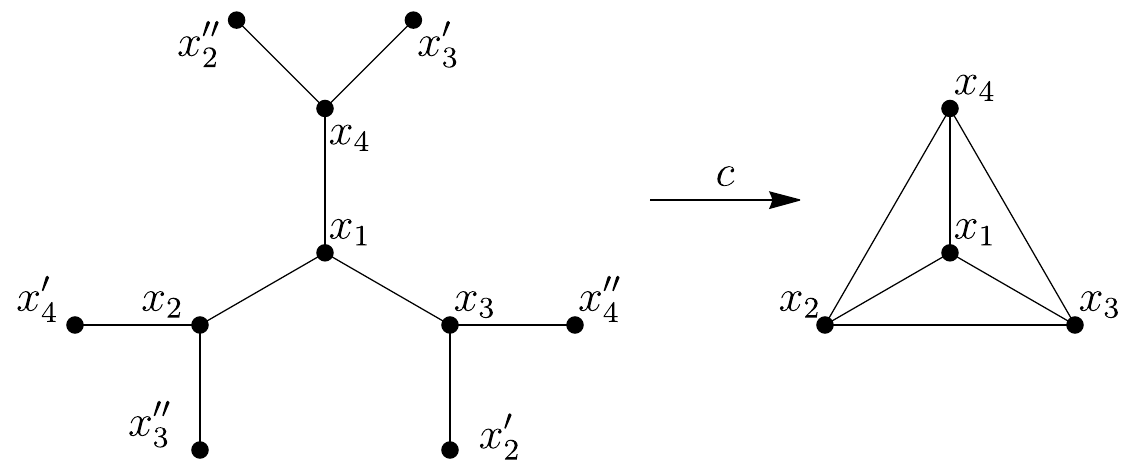}
\end{center}
\caption{Example of covering map $c$. In the above, the map $c$ satisfies $c(x_1)=x_1$ and $c(x_i)=c(x'_i)=c(x''_i)=x_i$ for $i=2,3,4$.}\label{fig:covering}
\end{figure}
\end{rema}
Let $\iota: X \to \mathbb{R}^3$ be a discrete surface on a full $3$-ary oriented tree $X$. Fix a root or node $x\in V_X$ and set $x_i\ (i=1,2,3)$ as adjacent vertices with $x$. Then, the oriented edges at $\iota(x)$ are determined as $\iota(e_i)=\iota(x_i)-\iota(x)$ for $i=1,2,3$. As in \cite{KNO(2017)}, we call $v_1(x)=\iota(e_1)-\iota(e_3)=\iota(x_1)-\iota(x_3)$ and $v_2(x)=\iota(e_2)-\iota(e_3)=\iota(x_2)-\iota(x_3)$ as {\it tangent vectors}.  
\begin{defi}[\cite{KNO(2017)}]
Let $\iota: X \to \mathbb{R}^3$ be a discrete surface on a full $3$-ary oriented tree $X$. At the
root or node $x\in V_X$, we define the {\it unit normal vector} $n(x)$ as 
$$n(x):=\frac{v_1 \wedge v_2}{||v_1 \wedge v_2||}=\frac{\iota(e_1) \wedge \iota(e_2)+\iota(e_2) \wedge \iota(e_3)+\iota(e_3) \wedge \iota(e_1)}{||\iota(e_1) \wedge \iota(e_2)+\iota(e_2) \wedge \iota(e_3)+\iota(e_3) \wedge \iota(e_1)||},$$
where $||\cdot||$ is the standard Euclidean norm in $\mathbb{R}^3$ and $\wedge$ is the cross product. 
$n(x)$ is also a unit normal vector of the tangent plane $T_x:=\textup{span}\{v_1(x),v_2(x) \}.$\par
Let the standard Euclidean inner product be $\langle \cdot, \cdot \rangle: \mathbb{R}^3\times\mathbb{R}^3\to \mathbb{R}$. Then we define the {\it first fundamental form} $I$ and {\it second fundamental form} $I\hspace{-.2em}I$ as follows (see Figure \ref{fig:normal}):  
$$I=\left(\begin{array}{cc}
\langle v_1, v_1\rangle & \langle v_1, v_2\rangle\\
\langle v_2, v_1\rangle & \langle v_2, v_2\rangle
\end{array}\right),\ I\hspace{-.2em}I=-\left(\begin{array}{cc}
\langle v_1, n(x_1)-n(x_3)\rangle & \langle v_1, n(x_2)-n(x_3)\rangle\\
\langle v_2, n(x_1)-n(x_3)\rangle & \langle v_2, n(x_2)-n(x_3)\rangle
\end{array}
\right).$$
As in \cite{KNO(2017)}, the {\it discrete mean curvature} $H(x)$ and {\it Gaussian curvature} $K(x)$ are defined as 
$$H(x)=\mathrm{tr}(I^{-1}I\hspace{-.2em}I),\quad \textup{and}\quad K(x)=\mathrm{det}(I^{-1}I\hspace{-.2em}I).$$
In this paper, we also define the {\it principal curvatures} $k_i(x)$ ($i=1,2$) by  
$$\quad  k_i(x)=\frac{H(x)\pm\sqrt{H(x)^2-4K(x)}}{2},$$
where the signs are defined as $+$ (resp. $-$) when $i=1$ (resp. $i=2$).
\end{defi}

\begin{figure}[H]
\begin{center}
\includegraphics[width=0.85\linewidth]{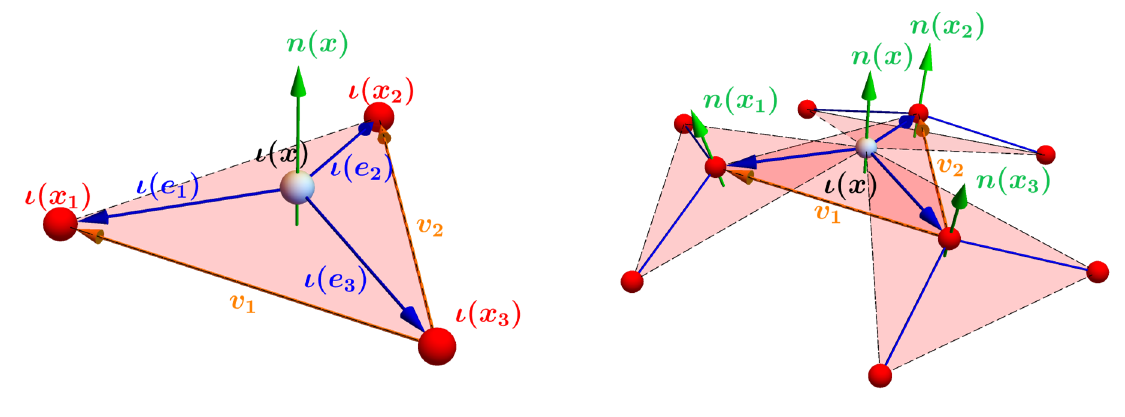}
\end{center}
\caption{Unit normal vectors on $\iota(X)$.}\label{fig:normal}
\end{figure}

\begin{rema}
We notice that the definition of $n(x)$ does not depend on the change of labels $i=1,2,3$, up to the plus and minus signs. By Proposition 3.5, 3.6 and Remark 3.7 in \cite{KNO(2017)}, we also notice that $H(x)$ and $k_i(x)$ have a same property, and $K(x)$ is same even when $n(x)$ changes $n(x)\mapsto -n(x)$. 
\end{rema}
\section{Discrete principal directions}\label{sec:principal}
Here we will define a discrete analogue of curvature lines on the discrete surface $\iota(X)$. If we consider the curvature line coordinates on a smooth surface, they have many good properties related to principal curvatures; Rodriguez formula, diagonalization of $I,\ I\hspace{-.2em}I$, etc.
However, in the discrete surface theory, we can not obtain whole of them, and the differences of the unit normal vectors are denoted by 
$$dn(x)=n(x_1)-n(x_3)\quad \textup{and}\quad d'n(x)=n(x_2)-n(x_3)$$
for short. 
\begin{lemm}\label{lemm:general I,II}
For a discrete surface $\iota(X)$ on a full $3$-ary oriented tree $X$, assume the unit normal vector satisfies 
$$dn(x)=\alpha_1(x)v_1(x)+\alpha_2(x)v_2(x)+\alpha_3(x)n(x),\quad d'n(x)=\beta_1(x)v_1(x)+\beta_2(x)v_2(x)+\beta_3(x)n(x)$$
for some real-valued functions $\alpha_i$ and $\beta_i$ ($i=1,2,3$). Then, the Weingarten map $I^{-1}I\hspace{-.2em}I$ is 
$$I^{-1}I\hspace{-.2em}I=\left(\begin{array}{cc}
-\alpha_1(x) & -\beta_1(x)\\
-\alpha_2(x) & -\beta_2(x)
\end{array}
\right).$$
Moreover, if $\alpha_2(x)\beta_1(x)=0$, then the principal curvatures equal $\alpha_1(x)$ and $\beta_2(x)$.
\end{lemm}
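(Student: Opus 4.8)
The plan is to prove the lemma by a short computation whose only geometric input is that the unit normal is orthogonal to the tangent plane: $\langle v_1(x),n(x)\rangle = \langle v_2(x),n(x)\rangle = 0$ by the very definition of $n(x)$.

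First I would substitute the hypothesised expansions $dn(x)=\alpha_1 v_1+\alpha_2 v_2+\alpha_3 n$ and $d'n(x)=\beta_1 v_1+\beta_2 v_2+\beta_3 n$ into the definition of $I\hspace{-.2em}I$. Because $\langle v_i,n\rangle=0$, the normal terms $\alpha_3 n$ and $\beta_3 n$ drop out of every inner product against $v_1$ or $v_2$, so $\langle v_i,dn\rangle=\alpha_1\langle v_i,v_1\rangle+\alpha_2\langle v_i,v_2\rangle$ and $\langle v_i,d'n\rangle=\beta_1\langle v_i,v_1\rangle+\beta_2\langle v_i,v_2\rangle$. Reading these four scalars as a matrix product immediately gives
\[
I\hspace{-.2em}I=-\,I\begin{pmatrix}\alpha_1(x) & \beta_1(x)\\ \alpha_2(x) & \beta_2(x)\end{pmatrix}.
\]
Since $I$ is invertible — the tangent vectors $v_1(x),v_2(x)$ are linearly independent, which is already required for $n(x)$ to be well defined — left-multiplication by $I^{-1}$ yields the asserted Weingarten map
\[
I^{-1}I\hspace{-.2em}I=-\begin{pmatrix}\alpha_1(x) & \beta_1(x)\\ \alpha_2(x) & \beta_2(x)\end{pmatrix}.
\]

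For the "moreover" statement, the observation is that $\alpha_2(x)\beta_1(x)=0$ makes one of the two off-diagonal entries of $I^{-1}I\hspace{-.2em}I$ vanish, so this $2\times 2$ matrix is triangular and its eigenvalues are its diagonal entries. Concretely, $H(x)=\mathrm{tr}(I^{-1}I\hspace{-.2em}I)=-(\alpha_1+\beta_2)$ and $K(x)=\mathrm{det}(I^{-1}I\hspace{-.2em}I)=\alpha_1\beta_2$, so $H(x)^2-4K(x)=(\alpha_1-\beta_2)^2\ge 0$ and the two numbers $\tfrac12\bigl(H(x)\pm\sqrt{H(x)^2-4K(x)}\bigr)$ that define $k_1(x)$ and $k_2(x)$ are exactly $-\alpha_1(x)$ and $-\beta_2(x)$, i.e. the two diagonal entries of the Weingarten map. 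I do not expect a genuine obstacle: the lemma is essentially bookkeeping. The two points needing care are keeping the global minus sign straight in passing from $I\hspace{-.2em}I$ to the Weingarten map, and, in the triangular case, noting that which of the "$+$"/"$-$" roots equals which diagonal entry depends on $\mathrm{sign}(\alpha_1(x)-\beta_2(x))$ but does not affect the unordered pair $\{k_1(x),k_2(x)\}$.
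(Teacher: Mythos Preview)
Your argument is correct and is exactly the ``just calculation'' the paper alludes to: express $I\hspace{-.2em}I$ as $-I$ times the coefficient matrix using $\langle v_i,n\rangle=0$, invert, then read off the eigenvalues in the triangular case. One remark: your computation gives the principal curvatures as $-\alpha_1(x)$ and $-\beta_2(x)$ (the diagonal entries of $I^{-1}I\hspace{-.2em}I$), which is the correct conclusion and is what the paper actually uses downstream (e.g.\ $\alpha_1=-k_1$, $\beta_2=-k_2$ in the corollary); the bare ``$\alpha_1(x)$ and $\beta_2(x)$'' in the lemma's last sentence appears to be a sign slip in the statement rather than an error on your part.
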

\begin{proof}
Just calculation.
\end{proof}
Considering the above lemma as an important clue, we will define discrete principal directions as follows:
\begin{defi}[Discrete principal directions]
Let $\iota: X \to \mathbb{R}^3$ be a discrete surface on a full $3$-ary oriented tree $X$. For a fixed point $x\in V_X$, at the root or node $\iota(x)$, we call $v_1(x)$ (resp. $v_2(x)$) as the {\it discrete principal $k_1(x)$-direction} (resp. $k_2(x)$-direction) if
$$dn(x)=-k_1(x)v_1(x)\quad (\textup{resp.}\ d'n(x)=-k_2(x)v_2(x))$$
 for the principal curvatures $k_1(x),\ k_2(x)$, the difference of unit normal vectors $dn(x)=n(x_1)-n(x_3)$ and $d'n(x)=n(x_2)-n(x_3)$. 
\end{defi}
\begin{rema}
The definition of discrete principal directions is an analogue of the Rodriguez equations for a smooth surface $f(u,v)\in \mathbb{R}^3$ with curvature line coordinates $(u,v)$:
$$\nu_u=-k_1f_u,\ \nu_v=-k_2f_v$$
for the unit normal vector $\nu(u,v)$.  
\end{rema}  
Here we introduce some properties of discrete principal directions.
\begin{coro}
Let $\iota: X \to \mathbb{R}^3$ be a discrete surface on a full $3$-ary oriented tree $X$. At the root or node $\iota(x)$, assume $v_i(x)$ ($i=1,2$) as the both discrete principal $k_i(x)$-directions. Then, the Weingarten map $I^{-1}I\hspace{-.2em}I$ is diagonalized as  
$$I^{-1}I\hspace{-.2em}I=\left(\begin{array}{cc}
k_1(x) & 0\\
0 & k_2(x)
\end{array}
\right).$$
\end{coro}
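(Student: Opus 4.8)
The plan is to reduce the statement directly to Lemma \ref{lemm:general I,II}. First I would spell out what the hypothesis ``$v_i(x)$ is the discrete principal $k_i(x)$-direction for $i=1,2$'' gives us in the language of that lemma: by definition it means $dn(x)=-k_1(x)v_1(x)$ and $d'n(x)=-k_2(x)v_2(x)$. Comparing these with the general expansions $dn(x)=\alpha_1 v_1+\alpha_2 v_2+\alpha_3 n$ and $d'n(x)=\beta_1 v_1+\beta_2 v_2+\beta_3 n$ appearing in Lemma \ref{lemm:general I,II}, I would note that since $v_1(x)$, $v_2(x)$, $n(x)$ form a basis of $\mathbb{R}^3$ (the first two span the tangent plane $T_x$ and are linearly independent by the definition of a discrete surface, and $n(x)$ is the unit normal, hence orthogonal to both), the coefficients in the expansion are uniquely determined. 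Matching coefficients therefore forces $\alpha_1(x)=-k_1(x)$, $\beta_2(x)=-k_2(x)$, and $\alpha_2(x)=\alpha_3(x)=\beta_1(x)=\beta_3(x)=0$.

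Next I would invoke Lemma \ref{lemm:general I,II}: it states $I^{-1}I\hspace{-.2em}I=\begin{pmatrix}-\alpha_1 & -\beta_1\\ -\alpha_2 & -\beta_2\end{pmatrix}$, so substituting the values just obtained yields $I^{-1}I\hspace{-.2em}I=\begin{pmatrix}k_1(x) & 0\\ 0 & k_2(x)\end{pmatrix}$, which is exactly the claim. As a sanity check one can observe that $\alpha_2(x)\beta_1(x)=0$ holds here (both factors vanish), so the ``moreover'' part of the lemma applies and confirms that the diagonal entries are indeed the principal curvatures, consistent with the labeling.

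The only genuine point requiring a word of care — and the step I expect to be the main (minor) obstacle — is justifying that the expansion coefficients are unique, i.e.\ that $\{v_1(x),v_2(x),n(x)\}$ is a basis of $\mathbb{R}^3$. This is where the definition of a discrete surface is used: at each root or node at least two of the edge vectors are linearly independent, which makes $v_1(x)$ and $v_2(x)$ linearly independent, and $n(x)$ is by construction a unit vector orthogonal to $\mathrm{span}\{v_1(x),v_2(x)\}=T_x$, hence not in that span. Once this is noted, everything else is a direct substitution into Lemma \ref{lemm:general I,II}, so the proof is genuinely short.

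\begin{proof}
By the definition of a discrete surface, $v_1(x)$ and $v_2(x)$ are linearly independent and span the tangent plane $T_x$, while $n(x)$ is a unit normal vector of $T_x$; hence $\{v_1(x),v_2(x),n(x)\}$ is a basis of $\mathbb{R}^3$ and the coefficients in any expansion along this basis are uniquely determined. By hypothesis $v_i(x)$ is the discrete principal $k_i(x)$-direction for $i=1,2$, which means $dn(x)=-k_1(x)v_1(x)$ and $d'n(x)=-k_2(x)v_2(x)$. Writing these in the form used in Lemma \ref{lemm:general I,II}, namely $dn(x)=\alpha_1(x)v_1(x)+\alpha_2(x)v_2(x)+\alpha_3(x)n(x)$ and $d'n(x)=\beta_1(x)v_1(x)+\beta_2(x)v_2(x)+\beta_3(x)n(x)$, uniqueness of the coefficients forces
$$\alpha_1(x)=-k_1(x),\quad \beta_2(x)=-k_2(x),\quad \alpha_2(x)=\alpha_3(x)=\beta_1(x)=\beta_3(x)=0.$$
In particular $\alpha_2(x)\beta_1(x)=0$, so Lemma \ref{lemm:general I,II} applies and gives
$$I^{-1}I\hspace{-.2em}I=\left(\begin{array}{cc}
-\alpha_1(x) & -\beta_1(x)\\
-\alpha_2(x) & -\beta_2(x)
\end{array}
\right)=\left(\begin{array}{cc}
k_1(x) & 0\\
0 & k_2(x)
\end{array}
\right),$$
as claimed.
\end{proof}
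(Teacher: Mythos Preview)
Your proof is correct and follows essentially the same approach as the paper: both reduce the corollary to Lemma~\ref{lemm:general I,II} by reading off the coefficients $\alpha_1=-k_1$, $\beta_2=-k_2$, $\alpha_2=\alpha_3=\beta_1=\beta_3=0$ from the discrete principal direction hypothesis. Your version simply adds the justification that $\{v_1(x),v_2(x),n(x)\}$ is a basis (so the coefficients are unique), a detail the paper leaves implicit.
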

\begin{proof}
We get the assertion by Lemma \ref{lemm:general I,II} with $\alpha_1=-k_1,\ \alpha_2=\alpha_3=\beta_1=\beta_3=0,\ \beta_2=-k_2$. 
\end{proof}
\begin{rema}
We notice that $I$ and $I\hspace{-.2em}I$ are not diagonalized in general, even if we assume $v_i(x)$ ($i=1,2$) as the both discrete principal $k_i(x)$-directions. 
\end{rema}
We introduce one of our main theorem related to the discrete principal directions and the center of curvature spheres. Before the theorem, we define the parallel transformation $P(t,x)$ of $x$.
\begin{defi}
Let $\iota: X \to \mathbb{R}^3$ be a discrete surface on a full $3$-ary oriented tree $X$. At any root or node $\iota(x)$, set $n(x)$ be the unit normal vector of $\iota(x)$. We define $P(t,x):=\iota(x)+tn(x)$ as the {\it parallel transformation} of $\iota(x)$, with a parameter $t\in \mathbb{R}$. 
\end{defi}
\begin{theo}[discrete principal directions and the center of curvature spheres]\label{theo:1}\ \par
Let $\iota: X \to \mathbb{R}^3$ be a discrete surface on a full $3$-ary oriented tree $X$. At any root or node $\iota(x)$, set $x_i\ (i=1,2,3)$ as adjacent vertices with $x$, and assume $v_1(x)$ as the discrete principal $k_1(x)$-direction. Define $P(t,x)$ as the parallel transformation with a parameter $t\in \mathbb{R}$.
\begin{itemize}
\item[(1)] If $t=\frac{1}{k_1(x)}$ for $k_1(x)\neq 0$, two points $P\left(\frac{1}{k_1(x)},x_1\right)$ and $P\left(\frac{1}{k_1(x)},x_3\right)$ degenerate to a same point $p$. We can regard $p$ as a center of the curvature sphere in the $k_1(x)$-direction.
\item[(2)] Let $d'n(x)=\beta_1(x)v_1(x)+\beta_2(x)v_2(x)+\beta_3(x)n(x)$. If $\iota(x)$ is a discrete constant principal curvature $k_1$ (CPC $k_1$) surface with $\beta_3(x)=0$, then for any $t$, $P(t,x)$ also becomes a discrete CPC $\frac{k_1}{1-tk_1}$ with the discrete principal $k_1(x)$-direction $v_1$. Moreover, $P(t,x)$ also has its unit normal vector as $n(P(t,x))=n(x)$.
\end{itemize}
\end{theo}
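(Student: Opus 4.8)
The plan is to prove both parts by a direct computation built on the defining relation $dn(x) = -k_1(x)\,v_1(x)$ of the discrete principal direction, so that Lemma \ref{lemm:general I,II} handles the classification. For part (1), expand
\[
P(t,x_1)-P(t,x_3) = \bigl(\iota(x_1)-\iota(x_3)\bigr)+t\bigl(n(x_1)-n(x_3)\bigr) = v_1(x)+t\,dn(x) = \bigl(1-t\,k_1(x)\bigr)\,v_1(x),
\]
which is $0$ exactly at $t=1/k_1(x)$; hence the two transformed points collapse to a single point $p$. Since $p-\iota(x_1)=\tfrac{1}{k_1(x)}n(x_1)$ and $p-\iota(x_3)=\tfrac{1}{k_1(x)}n(x_3)$, the sphere of radius $1/|k_1(x)|$ centered at $p$ meets $\iota(X)$ at both $\iota(x_1)$ and $\iota(x_3)$ with tangent plane there equal to $T_{x_1}$, resp.\ $T_{x_3}$, which is what justifies calling $p$ a center of a curvature sphere in the $k_1(x)$-direction.

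For part (2) I would pass to the parallel surface $\iota_t:=P(t,\cdot)\colon X\to\mathbb{R}^3$ and read off its tangent frame at $x$. The same expansion gives $v_1^t(x)=(1-tk_1)v_1(x)$, and $v_2^t(x)=v_2(x)+t\,d'n(x)=t\beta_1(x)v_1(x)+(1+t\beta_2(x))v_2(x)$ using $\beta_3(x)=0$. Both lie in $T_x$, so (for $t$ with $(1-tk_1)(1+t\beta_2(x))\neq0$, so that $v_1^t(x)\wedge v_2^t(x)\neq0$ and the normal of $\iota_t$ at $x$ is defined) the tangent plane is unchanged, and from $v_1^t\wedge v_2^t=(1-tk_1)(1+t\beta_2(x))\,v_1\wedge v_2$ one reads off $n(\iota_t(x))=n(x)$. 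Applying the identical argument at the adjacent nodes — here one uses that the standing hypotheses (CPC $k_1$, principal direction $v_1$, $\beta_3=0$) hold at every node — gives $n(\iota_t(x_1))=n(x_1)$ and $n(\iota_t(x_3))=n(x_3)$, hence
\[
dn^t(x)=n(\iota_t(x_1))-n(\iota_t(x_3))=n(x_1)-n(x_3)=dn(x)=-k_1\,v_1(x)=-\tfrac{k_1}{1-tk_1}\,v_1^t(x).
\]
By Lemma \ref{lemm:general I,II} with $\alpha_1=-\tfrac{k_1}{1-tk_1}$ and $\alpha_2=\alpha_3=0$, the value $\tfrac{k_1}{1-tk_1}$ is a principal curvature of $\iota_t$ and $v_1^t(x)$ (a nonzero multiple of $v_1(x)$) is the corresponding discrete principal direction; since this value is one and the same constant at every node, $\iota_t$ is a discrete CPC $\tfrac{k_1}{1-tk_1}$ surface.

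The substantive content is thus short, and the care needed is bookkeeping. One must be explicit that ``$P(t,x)$ becomes a discrete CPC surface'' refers to the whole map $\iota_t$, and that the standing hypotheses propagate from node to node so that the normal is genuinely preserved; one must also either exclude or interpret by continuity and orientation conventions the exceptional parameter values $t\in\{1/k_1,\,-1/\beta_2(x)\}$ at which a transformed tangent frame degenerates or the induced normal changes sign. I expect this sign and degeneracy bookkeeping, rather than any calculation, to be the only real obstacle.
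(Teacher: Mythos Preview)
Your proof is correct, and for part (2) it is essentially the same computation as the paper's: compute $v_1^t=(1-tk_1)v_1$ and $v_2^t=t\beta_1 v_1+(1+t\beta_2)v_2$, conclude $n(P(t,x))=n(x)$, and hence $dn^t=dn=-k_1v_1=-\tfrac{k_1}{1-tk_1}v_1^t$. Your bookkeeping about degenerate $t$ and the need for the hypotheses at neighboring nodes is, if anything, more careful than the paper's.

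For part (1) you take a genuinely shorter route. The paper argues geometrically: it places $q=\iota(x_3)+n(x_3)$, $r=\iota(x_1)+n(x_1)$, observes that the principal-direction condition forces $q,r,\iota(x_1),\iota(x_3)$ to be coplanar with the two normal lines non-parallel, introduces an auxiliary point $q'$, and uses the similarity $\triangle p\,\iota(x_1)\,\iota(x_3)\sim\triangle p\,q\,r$ to locate the intersection at distance $1/k_1(x)$ along each normal. Your one-line computation $P(t,x_1)-P(t,x_3)=(1-tk_1(x))v_1(x)$ reaches the identical conclusion algebraically, bypassing the similar-triangles argument entirely. What the paper's approach buys is a more visual picture of $p$ as the literal intersection of the normal lines (which also feeds directly into the subsequent Corollary on the formula for $k_1(x)$ via the angle $\theta$); what your approach buys is brevity and the immediate observation that the same identity governs the tangent vector $v_1^t$ in part (2).
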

\begin{figure}[H]
\begin{center}
\includegraphics[width=0.6\linewidth]{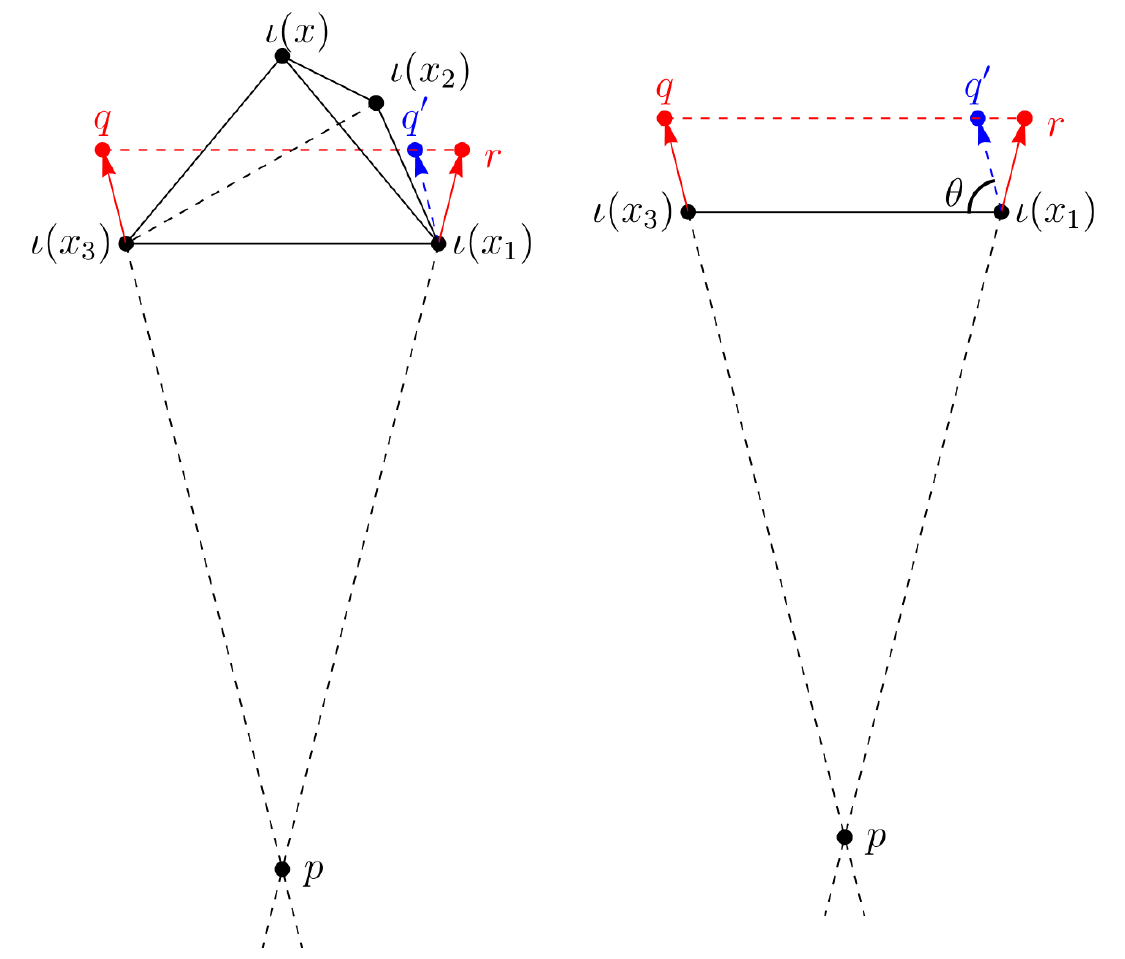}
\end{center}
\caption{Parallel transformation of $\iota(x)$.}\label{fig:parallel}
\end{figure}

\begin{proof}
(1) As in Figure \ref{fig:parallel} (left), set the vertices $q=\iota(x_3)+n(x_3)$ and $r=\iota(x_1)+n(x_1)$. Since $v_1(x)$ is the discrete principal $k_1(x)$-direction, $q,\ r,\ \iota(x_1)$ and $\iota(x_3)$ lie on the same plane, and we have $r-q\neq \iota(x_3)-\iota(x_1)$ for $k_1(x)\neq 0$. Thus, there exists an intersection $p$ between lines $q\iota(x_3)$ and $r\iota(x_1)$.\par
Next we take a point $q'$ as $q'=q-\iota(x_3)+\iota(x_1)$. Then, $||q'-r||=|k_1(x)|\cdot ||\iota(x_1)-\iota(x_3)||$ and $||q'-q||=||\iota(x_1)-\iota(x_3)||$ hold. By $\triangle p\iota(x_1)\iota(x_3)\sim \triangle pqr$, we have
$$p=\iota(x_3)+\frac{1}{k_1(x)}n(x_3)=\iota(x_1)+\frac{1}{k_1(x)}n(x_1).$$ 
(2) By assumptions, we have $v_1(P(t,x))=v_1(x)+t dn(x)=(1-tk_1)v_1(x)$ and $v_2(P(t,x))=v_2(x)+t d'n(x)=t\beta_1(x)v_1(x)+(1+t\beta_2(x))v_2(x)$. Thus, $n(x)=n(P(t,x))$ and $dn(P(t,x))=dn(x)=-k_1v_1(x)=-\frac{k_1}{1-tk_1}v_1(P(t,x)).$
\end{proof}
\begin{coro}
Let $\iota: X \to \mathbb{R}^3$ be a discrete surface on a full $3$-ary oriented tree $X$. At the root or node $\iota(x)$, set $x_i\ (i=1,2,3)$ as adjacent vertices with $x$, and assume $v_1(x)$ as the discrete principal $k_1(x)$-direction. Then, 
$$k_1(x)=\frac{2\langle v_1(x), n(x_3)\rangle}{||v_1(x)||^2}.$$
This formula implies $k_1(x)$ does not depend on $n(x_1)$.
\end{coro}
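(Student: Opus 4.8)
The plan is to read the stated formula directly off the defining relation of the discrete principal $k_1(x)$-direction, namely $dn(x)=n(x_1)-n(x_3)=-k_1(x)\,v_1(x)$, combined with the single extra fact that $n(x_1)$ and $n(x_3)$ are unit vectors. Concretely, I would rewrite the relation as $n(x_1)=n(x_3)-k_1(x)\,v_1(x)$, take the squared Euclidean norm of both sides, and use $\langle n(x_1),n(x_1)\rangle=\langle n(x_3),n(x_3)\rangle=1$. This gives $1=1-2k_1(x)\langle v_1(x),n(x_3)\rangle+k_1(x)^2||v_1(x)||^2$, i.e. $k_1(x)^2||v_1(x)||^2=2k_1(x)\langle v_1(x),n(x_3)\rangle$, and dividing by $k_1(x)$ yields $k_1(x)=\frac{2\langle v_1(x),n(x_3)\rangle}{||v_1(x)||^2}$ exactly.

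An alternative, slightly more symmetric route is to pair the defining relation with the vector $n(x_1)+n(x_3)$: since $\langle n(x_1)+n(x_3),\,n(x_1)-n(x_3)\rangle=||n(x_1)||^2-||n(x_3)||^2=0$ while $n(x_1)-n(x_3)=-k_1(x)v_1(x)$, one gets $\langle n(x_1)+n(x_3),\,v_1(x)\rangle=0$, hence $\langle n(x_1),v_1(x)\rangle=-\langle n(x_3),v_1(x)\rangle$; substituting this into $\langle n(x_1)-n(x_3),\,v_1(x)\rangle=-k_1(x)||v_1(x)||^2$ again produces the claim. In either derivation the right-hand side involves only $v_1(x)=\iota(x_1)-\iota(x_3)$ and $n(x_3)$, which is precisely the asserted independence from $n(x_1)$.

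There is essentially no obstacle here: it is a one-line computation once the unit-length constraint on the normal vectors is invoked. The only point requiring a word of care is the degenerate case $k_1(x)=0$, for which the division above is not permitted; as elsewhere in the paper (cf. Theorem~\ref{theo:1}(1)) one restricts to $k_1(x)\neq 0$, while noting that the homogeneous identity $k_1(x)^2||v_1(x)||^2=2k_1(x)\langle v_1(x),n(x_3)\rangle$ holds with no restriction.
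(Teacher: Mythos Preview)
Your argument is correct and arguably cleaner than the paper's. The paper proves the corollary geometrically, reusing the planar configuration from the proof of Theorem~\ref{theo:1}(1): with $q=\iota(x_3)+n(x_3)$, $r=\iota(x_1)+n(x_1)$ and $q'=q-\iota(x_3)+\iota(x_1)$, it introduces the angle $\theta=\cos^{-1}\bigl(-\langle v_1(x),n(x_3)\rangle/||v_1(x)||\bigr)$, observes that the triangle $\iota(x_1)rq'$ is isosceles with base $||q'-r||=|k_1(x)|\,||v_1(x)||$, and reads off $|k_1(x)|\,||v_1(x)||=2\cos\theta$ from the base angles. Your route bypasses the picture entirely: you just square the defining relation $n(x_1)=n(x_3)-k_1(x)v_1(x)$ and use $||n(x_1)||=||n(x_3)||=1$. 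This is shorter, avoids any sign bookkeeping hidden in the angle argument, and makes transparent why the only extra ingredient beyond the Rodrigues-type relation is the unit length of the normals. The paper's approach, on the other hand, ties the formula visually to the curvature-sphere center of Theorem~\ref{theo:1}, which has some expository value. Your remark that the division by $k_1(x)$ requires $k_1(x)\neq 0$ (while the homogeneous identity $k_1(x)^2||v_1(x)||^2=2k_1(x)\langle v_1(x),n(x_3)\rangle$ holds unconditionally) is a fair caveat; the paper's geometric proof implicitly assumes $k_1(x)\neq 0$ as well, since it relies on the intersection point $p$ from Theorem~\ref{theo:1}(1).
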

\begin{proof}
We can consider same setting with the proof of (1) in the above theorem. As in Figure \ref{fig:parallel} (right), set $\theta=\cos^{-1}\left(\frac{-\langle v_1(x),n(x_3)\rangle}{||v_1(x)||}\right)$. We also notice that $\angle \iota(x_3)qq'=\angle \iota(x_1)rq'=\angle \iota(x_1)q'r=\theta$ and $||q'-r||=|k_1(x)|\cdot ||v_1(x)||=2\cos \theta$ hold. Using these, we get the assertion.
\end{proof}
\begin{rema}\label{rema:CPCsmooth}
In the smooth surface theory of Section \ref{sec:smooth}, umbilic-free regular CPC $k_1$ surfaces are called as the pipe surfaces, and they always converge to a regular space curve $\gamma$ in $\mathbb{R}^3$ via parallel transformations. However, by Theorem \ref{theo:1}, in the discrete setting a discrete CPC $k_1$ surface may not degenerate to a discrete curve (bivalent graph) in general, because there exist some examples (Figure \ref{fig:standardArmchair}) that discrete CPC $k_1$ surface degenerates to a discrete surface (called the {\it center-axisoid} in \cite{KMNOO(??)}). See also Sections \ref{sec:examples} and \ref{sec:future} in this paper. 
\end{rema}
Next we will show our second main theorem. It is related to the existence of discrete surfaces in the both discrete principal $k_i(x)$-directions even when we assume the restriction of constant bond length, i.e. the length of edges are always same. For the application to nano material science \cite{KMNOO(??)}, it is natural and important to study the discrete surface theory under the setting of constant bond length. 
\begin{theo}[Existence of discrete surfaces with the discrete principal directions and constant bond length]\label{theo:2}
Let $r\in \mathbb{R}\backslash\{0\}$ be a constant, and $X$ be a full $3$-ary oriented tree. Under the given $k_1,\ k_2: V_X\to \mathbb{R}$, there exist at most $3$-parameter families of discrete surfaces $\iota(X)$ in the both discrete principal $k_i(x)$-directions and with the constant bond length $r$, up to the rigid motions and choices of $v_1(x),\ v_2(x)$ at every $x\in V_X$.
\end{theo}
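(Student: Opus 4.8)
The plan is to construct such a discrete surface step by step along the oriented tree, starting from the root and propagating outward, using the rigidity of the constant bond length condition together with the two Rodrigues-type equations $dn(x)=-k_1(x)v_1(x)$ and $d'n(x)=-k_2(x)v_2(x)$ to count the degrees of freedom at each stage. First I would set up the data carried at each vertex $x$: the position $\iota(x)\in\mathbb{R}^3$, the three edge vectors $\iota(e_i)$ (equivalently the tangent vectors $v_1(x),v_2(x)$ and one more scalar), and the unit normal $n(x)$. The constant bond length condition imposes $\|\iota(e_i)\|=r$ for $i=1,2,3$. At the root, after quotienting by the rigid motions of $\mathbb{R}^3$ (a $6$-parameter group), I expect the admissible configurations of the three unit-length edge vectors at the root (modulo the choice of which pair spans $T_x$, i.e.\ the choice of $v_1,v_2$) to form a space of some fixed dimension; the claim that the final family has $3$ parameters should come from carefully adding up what is free at the root and showing nothing new is free at the interior nodes.

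The core of the argument is the propagation step. Suppose $\iota$ has been defined on a vertex $x$ and all its data ($\iota(x)$, the $\iota(e_i)$, $n(x)$) are known; I want to show the data at each child $x_i$ is then \emph{determined} (no new parameters) by the constraints. The key relations are: (a) the edge from $x$ to that child has length $r$, which together with orientation fixes $\iota(x_i)-\iota(x)$ once the edge direction is known; (b) the discrete principal direction equations at $x$ read $n(x_1)-n(x_3)=-k_1(x)v_1(x)$ and $n(x_2)-n(x_3)=-k_2(x)v_2(x)$, and since the $v_i(x)$ and $k_i(x)$ are already known, these express $n(x_1)$ and $n(x_2)$ in terms of the single unknown $n(x_3)$; (c) the unit-normal condition $\|n(x_i)\|=1$ for each of the three children then gives three scalar equations in the unknown $n(x_3)$ (three components). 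Generically this system is $0$-dimensional, pinning down $n(x_3)$ (hence $n(x_1),n(x_2)$) up to finitely many choices, and then the three edge directions emanating \emph{from} $x_i$ onward are constrained by their own length-$r$ condition and the requirement that $n(x_i)$ be the unit normal of the tangent plane at $x_i$, i.e.\ that $\langle \iota(e_j^{(x_i)}), n(x_i)\rangle$ take the prescribed common value forced by $\|\iota(e_j)\|=r$ and $v_1\wedge v_2 \parallel n$. So the propagation consumes the normals and edge vectors that were already "half-determined" at the parent and leaves no residual freedom, which is why the total count stays at $3$ rather than growing with the size of the tree.

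I would organize the write-up as: (i) fix notation and list the unknowns per vertex and the constraint equations; (ii) analyze the root, computing the dimension of the constraint variety modulo rigid motions, and observe it is $3$ (this is where I expect the precise number $3$ to emerge — essentially two free angles in placing the three unit edge vectors subject to two of them being in a prescribed planar relation, plus one more, the exact bookkeeping being the delicate part); (iii) prove the propagation lemma above, namely that passing from a fully-specified vertex to a child introduces no new continuous parameters because the combination of (length $=r$), (the two Rodrigues equations determining $n(x_1),n(x_2)$ from $n(x_3)$), and (three unit-normal equations) is generically determined; (iv) conclude by induction over the tree that the whole family is parametrized by the $3$ root parameters, hence "at most $3$-parameter" (the "at most" absorbing the possibility that some of the finitely many branch choices or some parameter values are inconsistent). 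The main obstacle, I expect, is step (iii): showing that the system for $n(x_3)$ arising from the three constraints $\|n(x_1)\|=\|n(x_2)\|=\|n(x_3)\|=1$ (with $n(x_1),n(x_2)$ affine in $n(x_3)$) is generically $0$-dimensional, and handling degenerate loci where it is not — this is genuinely a small piece of elimination/intersection theory rather than a routine computation, and I would likely need a nondegeneracy hypothesis (implicit in "at most", and in the assumption that $\iota$ is a discrete surface so that $v_1,v_2$ stay independent) to make it go through cleanly.
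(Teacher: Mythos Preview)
Your overall strategy---an inductive construction along the tree with parameter counting---matches the paper's approach. However, the paper organizes the induction differently, and this difference matters for the bookkeeping.

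The paper takes the \emph{normals} as the primary free data: after fixing $\iota(x_0)$ and $\iota(x_3)$ by a rigid motion, the three parameters are one spherical angle for $n(x_0)$ (the residual rotation about the axis $\iota(x_0)\iota(x_3)$ kills the other) and two spherical angles for $n(x_3)$. With these in hand, the Corollary $k_i(x)=\dfrac{2\langle v_i(x),\,n(x_3)\rangle}{\|v_i(x)\|^2}$ supplies one extra scalar equation per new vertex, which together with the tangent-plane and bond-length constraints pins each new $\iota(x_j)$ to finitely many points. The Rodrigues relations then propagate the normals forward \emph{uniquely}, and the Corollary guarantees the propagated vectors are automatically unit.

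Your version instead takes the three root edge vectors as the free data (also three parameters modulo rigid motions) and then solves a $3\times 3$ system---the unit-norm equations $\|n(x_1)\|=\|n(x_2)\|=\|n(x_3)\|=1$ with $n(x_1),n(x_2)$ affine in $n(x_3)$ via Rodrigues---to recover the children's normals. That is fine at the root, but your propagation step is where the write-up slips. At an interior vertex $x_i$ one of the three adjacent vertices is the parent $x$, whose normal $n(x)$ is already known; so there is no longer an unknown ``$n(x_3)$'' to solve for, and your $3\times 3$ system collapses. The constraint you actually need at this stage is the one you do not invoke: the prescribed values $k_1(x_i),k_2(x_i)$ (equivalently, the unit-norm condition on the Rodrigues-propagated normals of the \emph{grandchildren}) furnish exactly the missing equation per new vertex that cuts ``point on a circle'' down to ``finitely many points.'' As written, your listed constraints at $x_i$ (length $r$ plus $n(x_i)$ normal to the tangent plane) leave each grandchild on a one-parameter circle, so the assertion ``no residual freedom'' is stated but not established.

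In short: your plan is close to the paper's and can be made to work, but you should reorganize the induction so that at each interior node the curvature data $k_j(x_i)$---via the Corollary formula, or equivalently your unit-norm-of-propagated-normal device applied one level down---is explicitly the third scalar constraint that kills the circle parameter. The paper sidesteps this muddle by making the normals the primitive unknowns from the start and using the Corollary directly.
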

\begin{proof}
We will show it as the following steps:\\
\underline{Step $1$:} Up to rigid motions, we can fix $\iota(x_0)$ and one of its adjacent vertex $\iota(x_3)$ uniquely.\\
\underline{Step $2$:} We define the unit normal vector $n(x_0)$ arbitrary, i.e. 
$n(x_0)=(\cos(\theta_1)\cos(\phi_1),\cos(\theta_1)\sin(\phi_1),$ $\sin(\theta_1))$, and up to the rotation with the $\iota(x_0)\iota(x_3)$-axis, without loss of generality, we can fix $\phi_1=0$. Similarly, we can define $n(x_3)=(\cos(\theta_2)\cos(\phi_2),\cos(\theta_2)\sin(\phi_2),\sin(\theta_2)).$\\
\underline{Step $3$:} Next we can define other adjacent vertices $\iota(x_1)$ and $\iota(x_2)$ uniquely, by the following restrictions.
\begin{itemize}
\item on the plane with the unit normal vector $n(x_0)$
\item on the sphere with the center $\iota(x_0)$ and radius $r$
\item satisfying the equations $k_1(x_0)=\frac{2\langle v_1(x_0), n(x_3)\rangle}{||v_1(x_0)||^2}$ and $k_2(x_0)=\frac{2\langle v_2(x_0), n(x_3)\rangle}{||v_2(x_0)||^2}.$     
\end{itemize}
\underline{Step $4$:} Similarly, we can apply the same procedure of Step $3$ in order to determine the adjacent vertices $\iota(x_{31})$ and $\iota(x_{32})$ of $\iota(x_3)$.\\
\underline{Step $5$:} By the property of the discrete principal directions, $n(x_{31})$ and $n(x_{32})$ are uniquely determined.\\ 
\underline{Step $6$:} Apply the same procedure of Step $3$ again, in order to determine adjacent vertices of $\iota(x_{31})$, $\iota(x_{32})$, $\iota(x_{1})$ and $\iota(x_{2})$.\par
We can apply the above steps repeatedly to determine all $\iota(x)$, and only have at most $3$-parameters $\phi_1,\ \theta_2$ and $\phi_2$. 
\end{proof}

\section{Examples of discrete CPC surfaces}\label{sec:examples}
In this section, we construct some examples of discrete CPC surfaces.
Considering the results in \cite{KNO(2017)}, we see that some of regular polyhedrons (hexahedron and dodecahedron) become discrete surfaces in the both discrete principal $k_i$-directions and with constant bond length. They also showed a regular truncated icosahedron has the same property because of $\iota(x)=rn(x)$ for the principal curvature $k_1(x)=k_2(x)=-\frac{1}{r}$.
By Theorem \ref{theo:1} and Remark \ref{rema:CPCsmooth}, we will divide cases of discrete CPC $k_1$ surfaces considering the following geometric properties:
\begin{eqnarray}
\textup{the discrete principal $k_1$ (reps. $k_2$)-direction,}\label{eq:cond1}\\
\textup{$\beta_3(x)=0$ (resp. $\alpha_3(x)=0$),}\label{eq:cond2}\\
\textup{constant bond length,} \label{eq:cond3}
\end{eqnarray}
where $\alpha_3(x)$ and $\beta_3(x)$ satisfy $dn(x)=\alpha_1(x)v_1(x)+\alpha_2(x)v_2(x)+\alpha_3(x)n(x)$, $d'n(x)=\beta_1(x)v_1(x)+\beta_2(x)v_2(x)+\beta_3(x)n(x)$.\par

Here we demonstrate some examples satisfying the above properties more. Before the calculation, we show the equivalent condition of the discrete principal directions:
\begin{eqnarray}
&&dn(x)=-k_1(x)v_1(x)\ \textup{and}\ d'n(x)=-k_2(x)v_2(x)\nonumber\\
&\Longleftrightarrow& \begin{cases}
n(x_1)=-k_1(x)\iota(x_1)-k_2(x)\iota(x_3)+c(x)\\
n(x_2)=-k_1(x)\iota(x_3)-k_2(x)\iota(x_2)+c(x)\\
n(x_3)=-k_1(x)\iota(x_3)-k_2(x)\iota(x_3)+c(x)\label{eq:principal}
\end{cases}
\end{eqnarray}
for a vector $c(x)\in\mathbb{R}^3$. 
\subsection{Discrete round cylinder}
We consider a ``discrete round cylinder'' that is a discrete analogue of a round cylinder in $\mathbb{R}^3$ (i.e. all vertices $\iota(x)$ lie on the smooth round cylinder with the radius $r$) and has $k_1(x)\equiv -\frac{1}{r}$ and $k_2(x)\equiv 0$. This is a mathematical model of carbon nanotubes.\par  
In the theorem 4.7 of \cite{KNO(2017)}, the authors in \cite{KNO(2017)} considered discrete round cylinders constructed by regular hexagons with constant bond length and calculated their mean and Gaussian curvatures.
However, as they mentioned in the remark 4.8 of \cite{KNO(2017)}, we cannot get $H=-\frac{1}{2r}$ in general (see also Figure \ref{fig:standardArmchair}). This implies that $k_2=0$ but $k_1\neq-\frac{1}{r}$, and we have some gaps between  smooth round cylinders and discrete round cylinders.\par
\begin{figure}[H]
\begin{center}
\includegraphics[width=0.9\linewidth]{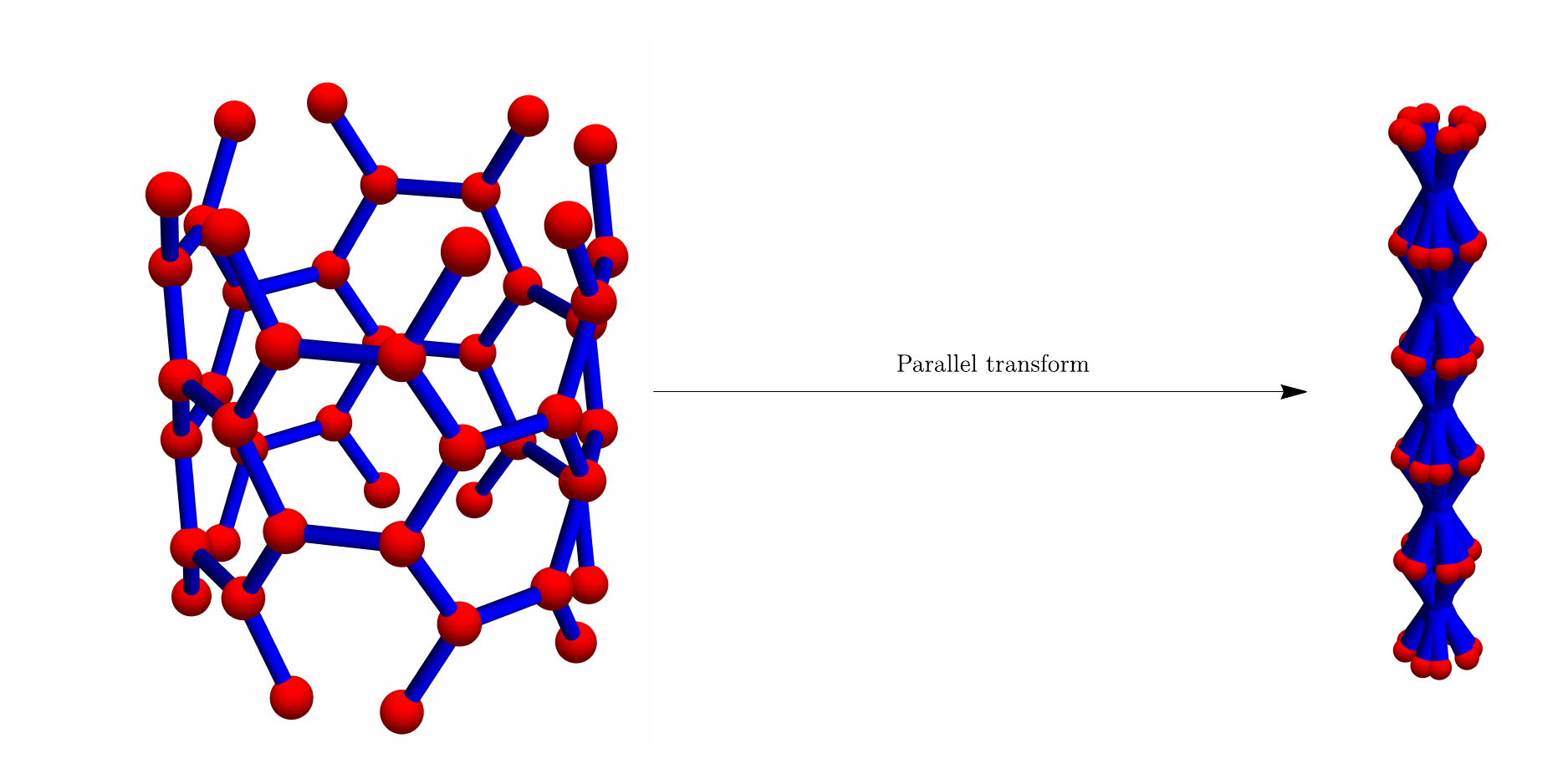}
\end{center}
\caption{Armchair-type discrete round cylinder in \cite{KNO(2017)} with constant bond length. It has the property in the discrete principal $k_2=0$-direction, but not in the $k_1$-direction. Moreover, $k_1\neq-\frac{1}{r}$ and this discrete round cylinder does not degenerate to a line via the parallel transformation.}\label{fig:standardArmchair}
\end{figure}
 To solve this problem, in \cite{KNO(2017)} they constructed armchair-type discrete round cylinders,
changing bond length (see also Figure \ref{fig:KNOtube}).
\begin{figure}[H]
\begin{center}
\includegraphics[width=0.9\linewidth]{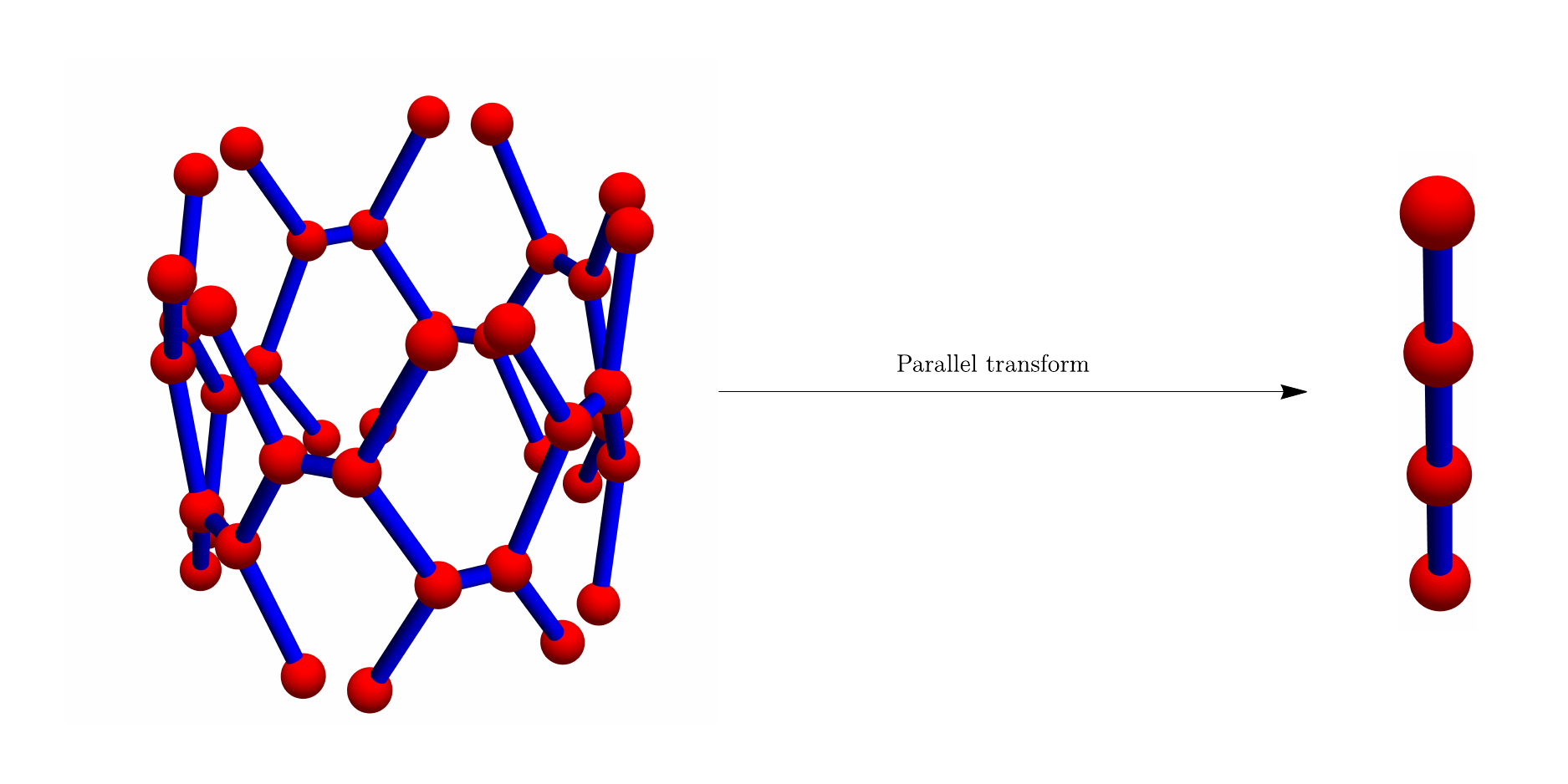}
\end{center}
\caption{Armchair-type discrete round cylinder in \cite{KNO(2017)} without constant bond length. It has the property in the discrete principal $k_2=0$-direction, but not in the $k_1=-\frac{1}{r}$- direction. We can also calculate $\alpha_3(x)=0$, and thus by Theorem \ref{theo:1} their discrete round cylinder also degenerates to a line via the parallel transformation.}\label{fig:KNOtube}
\end{figure}
In this paper, we also challenge to obtain discrete round cylinders constructed by regular hexagons with ``constant bond length, $k_1=-\frac{1}{r}$ and $k_2=0$'', analyzing the discrete principal directions. We assume their all faces are constructed by hexagons with constant bond length. By the equation \eqref{eq:principal} with $k_1(x)\equiv -\frac{1}{r}$ and $k_2(x)\equiv 0$, we have
\begin{eqnarray}
n(x_1)=\frac{1}{r}\iota(x_1)+c(x),\ 
n(x_2)=n(x_3)=\frac{1}{r}\iota(x_3)+c(x).\label{eq:principal2}
\end{eqnarray}
Now we show a solution of the above equation.
\begin{theo}\label{theo:3}
There exists a chiral-type discrete round cylinder having $k_1(x)\equiv -\frac{1}{r}$ and $k_2(x)\equiv 0$, in the both discrete principal $k_i$-directions and with constant bond length $d\in \mathbb{R}\backslash\{0\}$ (see also Figure \ref{fig:Chiral}).
\begin{figure}[H]
\begin{center}
\includegraphics[width=0.9\linewidth]{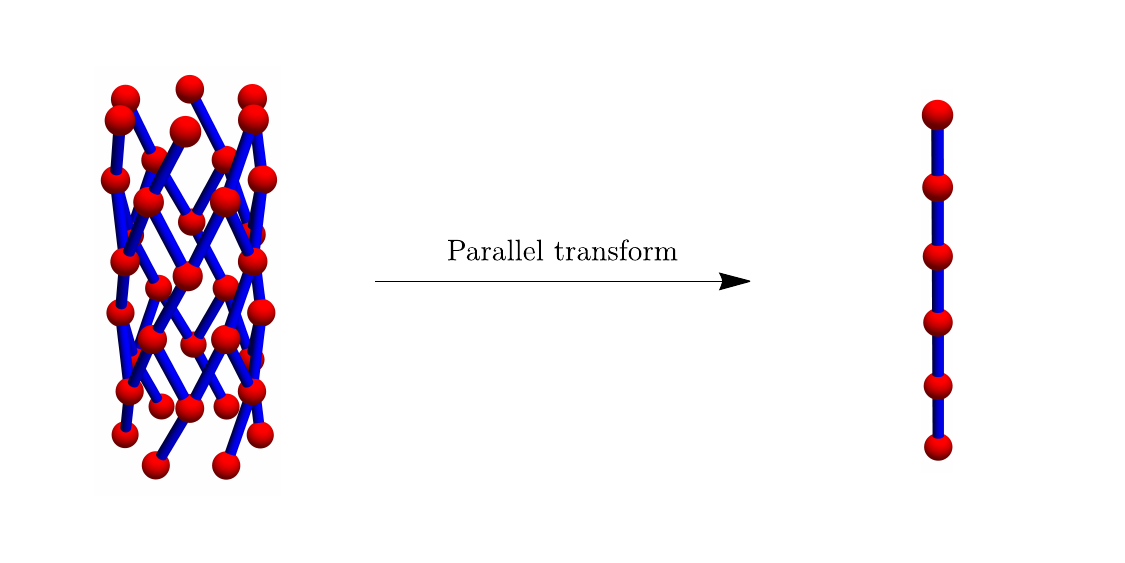}
\end{center}
\caption{Chiral-type discrete round cylinder with constant bond length, $k_1=-\frac{1}{r}$ and $k_2=0$.}\label{fig:Chiral}
\end{figure}
\end{theo}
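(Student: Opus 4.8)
The plan is to solve \eqref{eq:principal2} by wrapping a sheared honeycomb lattice onto the round cylinder. Write $C_r=\{(x_1,x_2,x_3)\in\mathbb R^3:x_1^2+x_2^2=r^2\}$, and for $p\in\mathbb R^3$ let $p_h=(p_1,p_2,0)$ be its horizontal part, so that $\tfrac1r p_h$ is the outward unit normal of $C_r$ at $p\in C_r$. First I would record the reduction: it suffices to produce a discrete surface $\iota:X\to C_r$ with all edges of length $d$ such that at every vertex $x$, after ordering its three neighbours suitably as $x_1,x_2,x_3$, one has (a)~$\iota(x_2)_h=\iota(x_3)_h$ (i.e. $x_2,x_3$ lie on a common ruling of $C_r$), and (b)~$\iota(x_1)$ and $\iota(x_3)$ have the same height while the angular coordinate of $\iota(x)$ is the average of those of $\iota(x_1)$ and $\iota(x_3)$. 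Indeed, (a)--(b) make $v_1(x)=\iota(x_1)-\iota(x_3)$ horizontal and $v_2(x)=\iota(x_2)-\iota(x_3)$ vertical, so a one-line cross-product computation yields $n(x)=\tfrac1r\iota(x)_h$ at \emph{every} vertex; then $n(x_2)=n(x_3)$ and $n(x_j)=\tfrac1r\iota(x_j)_h=\tfrac1r\iota(x_j)-\tfrac1r(0,0,z(x_3))$ for $j=1,3$ (using $z(x_1)=z(x_3)$), which is precisely \eqref{eq:principal2} with $c(x)=-\tfrac1r(0,0,z(x_3))$. Hence, by \eqref{eq:principal}, $v_1$ and $v_2$ are the discrete principal directions with $k_1(x)\equiv-\tfrac1r$ and $k_2(x)\equiv0$.

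For the construction I would parametrise $C_r$ by $\Phi(\phi,z)=(r\cos\phi,r\sin\phi,z)$. Fix $\delta\in(0,\pi)$ and $t>0$, set $v=(\delta,-t)$ and $w=(\delta,t)$ in the $(\phi,z)$-plane, and let $\Gamma$ be the graph whose vertices are the lattice $L=\mathbb Z v+\mathbb Z w$, two-coloured by the parity of $m+n$ at $mv+nw$, with an even ($A$-)point joined to its $L$-translates by $+v,-v,-w$ and an odd ($B$-)point to its translates by $+v,-v,+w$. Then $\Gamma$ is a connected trivalent graph (a sheared honeycomb), so by Remark \ref{rema:covering} there is a covering $c:X\to\Gamma$, and since $L\subset\mathbb R^2$ we may set $\iota:=\Phi\circ c:X\to C_r$. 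If one wants a closed tube, take $\delta=\pi/N$ for an integer $N\ge3$, so that the horizontal lattice vector $N(v+w)$ acts trivially on $C_r$; for any $\delta\in(0,\pi)$ all vertices of $\iota(X)$ lie on $C_r$. Every edge of $\iota(X)$ is the $\Phi$-image of a $(\phi,z)$-displacement $\pm v$ or $\pm w$, whose chordal length in $\mathbb R^3$ is $\sqrt{4r^2\sin^2(\delta/2)+t^2}$; so, given $d$, one picks $\delta$ small enough that $4r^2\sin^2(\delta/2)<d^2$ and sets $t=\sqrt{d^2-4r^2\sin^2(\delta/2)}$, obtaining constant bond length $d$. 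The edge directions $\pm v,\pm w$ are neither circumferential nor axial, so $\iota(X)$ is of chiral type.

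It then remains to verify (a)--(b) and that $\iota$ is a discrete surface. At an $A$-vertex with $\iota(x)=\Phi(\phi,z)$, I would label $x_1,x_2,x_3$ as the $(+v)$-, $(-v)$-, $(-w)$-neighbours; then $\iota(x_2)$ and $\iota(x_3)$ both sit at angle $\phi-\delta$, while $\iota(x_1)$ and $\iota(x_3)$ both sit at height $z-t$ with $\phi=\tfrac12\bigl((\phi+\delta)+(\phi-\delta)\bigr)$, so (a)--(b) hold; explicitly $v_1(x)=2r\sin\delta\,(-\sin\phi,\cos\phi,0)$ and $v_2(x)=(0,0,2t)$, which are orthogonal and nonzero, with $n(x)=(\cos\phi,\sin\phi,0)=\tfrac1r\iota(x)_h$. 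At a $B$-vertex one relabels so that $\{x_1,x_3\}$ are the $(-v)$- and $(+w)$-neighbours (which have equal height) and $x_2$ the $(+v)$-neighbour, and the same conclusions follow. Since the two tangent vectors are always independent, $\iota$ is a discrete surface, and combining this with the first paragraph shows $\iota(X)$ is a chiral-type discrete round cylinder with $k_1(x)\equiv-\tfrac1r$, $k_2(x)\equiv0$, in both discrete principal $k_i$-directions and with constant bond length $d$, as illustrated in Figure \ref{fig:Chiral}.

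The only step that is not routine bookkeeping is the global claim in the first paragraph: that imposing (a)--(b) consistently along the whole tree genuinely forces $n$ to be the cylinder normal at every vertex, so that the two principal curvatures are \emph{exactly} $-\tfrac1r$ and $0$ rather than merely approximately. The impossibility of achieving this with regular (equiangular) hexagons is precisely why the armchair cylinder of Figure \ref{fig:standardArmchair} falls short, and is what forces the ``flattened'' shape of the hexagonal faces here; everything else reduces to the scalar relation $d^2=4r^2\sin^2(\delta/2)+t^2$ together with tracking the ordering of neighbours on the two colour classes.
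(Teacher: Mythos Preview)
Your proof is correct and follows essentially the same construction as the paper: place vertices on the smooth cylinder $C_r$ so that at each vertex $v_1$ is circumferential and $v_2$ is axial, which forces $n(x)$ to coincide with the cylinder normal everywhere and hence yields $dn=\tfrac1r v_1$, $d'n=0$. Your exposition is more thorough in setting up the global sheared-honeycomb lattice and verifying the two colour classes separately, whereas the paper simply writes down the single fundamental piece $\iota(x_0)=(r,0,0)$, $\iota(x_i)=(r\cos\theta,\pm r\sin\theta,\pm h)$ (your $\theta=\delta$, $h=t$, giving the same bond-length relation $d^2=2r^2(1-\cos\theta)+h^2$) and then appeals to rotational and translational symmetry.
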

\begin{proof}
We show the recipe. At a root $x_0\in V_x$, without loss of generality, we can set $c(x_0)=0$ in the equation \eqref{eq:principal2}. Define $\iota(x_0)=(r,0,0)$ and its adjacent vertices as $\iota(x_1)=(r\cos\theta, r\sin\theta,h),\ \iota(x_2)=(r\cos\theta, -r\sin\theta,-h)$ and $\iota(x_3)=(r\cos\theta, -r\sin\theta,h)$ for a constant $h\in\mathbb{R}\backslash\{0\}.$ Considering the equation \eqref{eq:principal2}, we also define $n(x_0)=\frac{1}{r}\iota(x_0),\ n(x_1)=\frac{1}{r}\iota(x_1)$ and $n(x_2)=n(x_3)=\frac{1}{r}\iota(x_3).$ Then, 
\begin{eqnarray*}
v_1(x_0)=(0,2r\sin\theta,0),\ v_2(x_0)=(0,0,-2h),\ 
dn(x_0)=\frac{1}{r}(0,2r\sin\theta,0),\ d'n(x_0)=\mathbf{0},\\
d=||\iota(x_0)-\iota(x_1)||=||\iota(x_0)-\iota(x_2)||=||\iota(x_0)-\iota(x_3)||=2r^2(1-\cos\theta)+h^2.
\end{eqnarray*}
For other vertices $x\in V_X$, we can apply the rotational and translational symmetry of this fundamental piece (i.e. a full $3$-ary oriented tree) $\{\iota(x_0), \iota(x_1), \iota(x_2), \iota(x_3)\}.$
\end{proof}
As a summary of this subsection, we have the following table \ref{table:1}.
\begin{table}[H]
\centering
    \begin{tabular}{|l||c|c|c|c|}  \hline
    & condition \eqref{eq:cond1} & $\alpha_3$ & $\beta_3$ & condition \eqref{eq:cond3} 
   \\ \hline \hline
   Fig. \ref{fig:standardArmchair} & $k_1$: $\times$, $k_2$: $\bigcirc$  & $\neq 0$ & $0$ & $\bigcirc$ \\ \hline 
   Fig. \ref{fig:KNOtube} & $k_1$: $\times$, $k_2$: $\bigcirc$  & $0$ & $0$ & $\times$  \\ \hline
   Fig. \ref{fig:Chiral}& $k_1$: $\bigcirc$, $k_2$: $\bigcirc$  & $0$ & $0$ & $\bigcirc$ \\ \hline
  \end{tabular}
\caption{Comparison between three models of discrete round cylinders. In this table, $\bigcirc$ means the condition satisfies, and $\times$ means not so.}\label{table:1} 
\end{table}

\subsection{Discrete CPC tori.}
Here we consider a discrete analogue of compact examples. Topologically spherical discrete surfaces are well-known, for example some of regular polyhedrons (hexahedron and dodecahedron) and regular truncated icosahedron. We now step to construct a discrete torus, especially a discrete CPC $k_1$ torus, that is a discrete analogue of a CPC $k_1$ torus in $\mathbb{R}^3$ (i.e. all vertices $\iota(x)$ lie on the smooth CPC $k_1$ torus, like a closed tube).
\begin{theo}\label{theo:4}
There exists a discrete CPC $k_1$ torus in the both discrete principal $k_i$-directions (see also Figure \ref{fig:Chiral}).
\end{theo}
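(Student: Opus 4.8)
The plan is to carry over the smooth construction of the Fact in Section~\ref{sec:smooth} to the discrete setting, mirroring the proof of Theorem~\ref{theo:3}, but with the straight axis of the round cylinder replaced by a \emph{closed} discrete center curve $\gamma$; the model case is a discrete ``torus of revolution'', i.e.\ a tube of radius $r:=1/|k_1|$ around a regular $N$-gon. Concretely, I would take $\gamma_j=(R\cos\psi_j,R\sin\psi_j,0)$ with $\psi_j=2\pi j/N$, $j\in\mathbb Z/N\mathbb Z$, pick at each $\gamma_j$ a $2$-plane $\Pi_j$ transverse to $\gamma$ (for the planar $N$-gon the naive choice is $\Pi_j=\mathrm{span}\{e_r(\psi_j),e_z\}$, $e_r(\psi)=(\cos\psi,\sin\psi,0)$) with orthonormal basis $\{b_1^{(j)},b_2^{(j)}\}$, and set
$$n_{j,l}:=\cos\varphi_{j,l}\,b_1^{(j)}+\sin\varphi_{j,l}\,b_2^{(j)},\qquad \varphi_{j,l}:=\tfrac{2\pi l}{M}+j\delta,\qquad \iota_{j,l}:=\gamma_j+r\,n_{j,l},$$
where $\delta$ is a twist. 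Then every vertex lies on a smooth CPC torus, $\iota_{j,l}=\gamma_j-\tfrac1{k_1}n_{j,l}$ is the exact discrete analogue of \eqref{eq:smoothCPC}, and $n_{j,l}$ is meant to be its unit normal at $\iota_{j,l}$. In contrast with Theorem~\ref{theo:3}, the statement imposes no constant-bond-length condition, so $N$, $M$, $\delta$, $R$ and the $\Pi_j$ are all free.

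Next I would fix the combinatorics of the full $3$-ary oriented tree $X$ and of $\iota\colon X\to\mathbb R^3$ onto the grid $\{\iota_{j,l}\}$ by a trivalent ``nanotorus'' mesh pattern --- the closed analogue of the chiral cylinder mesh of Theorem~\ref{theo:3} --- chosen so that, at a node $x$ with $\iota(x)=\iota_{j,l}$, the first tangent vector $v_1(x)=\iota(x_1)-\iota(x_3)$ is a chord of the \emph{same} small circle, $v_1(x)=r\bigl(n(x_1)-n(x_3)\bigr)$, while $v_2(x)=\iota(x_2)-\iota(x_3)$ joins to a neighbouring small circle at a matched angle. Granting that the cross-product unit normal $v_1(x)\wedge v_2(x)/\|v_1(x)\wedge v_2(x)\|$ at each $\iota_{j,l}$ is $\pm n_{j,l}$ (the key point, see below), one then has $dn(x)=n(x_1)-n(x_3)=\tfrac1r v_1(x)$, so $k_1=-1/r$ is \emph{constant} (the CPC-$k_1$ property; it can also be read off from the Corollary after Theorem~\ref{theo:1}), and the angle-matching is exactly what forces $d'n(x)=n(x_2)-n(x_3)$ to be a multiple of $v_2(x)$. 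Lemma~\ref{lemm:general I,II} (with $\alpha_2=\alpha_3=\beta_1=\beta_3=0$) then gives $I^{-1}I\hspace{-.2em}I=\mathrm{diag}(k_1,k_2(x))$ with $k_2(x)$ determined by $d'n(x)=-k_2(x)v_2(x)$ --- non-constant, just as for a smooth torus --- so $(v_1(x),v_2(x))$ realizes both discrete principal directions. The remaining vertices of $X$ are produced by the screw-type symmetries of the configuration, exactly as the ``rotational and translational symmetry'' is used at the end of the proof of Theorem~\ref{theo:3}; choosing $N$, $M$, $\delta$ compatibly makes $\{\iota_{j,l}\}$ finite and $X$ a cover of a trivalent graph $\tilde X$ on the torus in the sense of Remark~\ref{rema:covering}.

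The crux --- and the step I expect to be hardest --- is precisely the parenthetical claim, that the discrete normal agrees with the ``radial'' normal $n_{j,l}$. For the naive torus of revolution this fails: all the planes $\mathrm{span}\{e_r(\psi_j),e_z\}$ pass through the $z$-axis, so a secant of an adjacent small circle is never orthogonal to $n_{j,l}$ simultaneously for all $\varphi_{j,l}$, the discrete tangent plane tilts away from the true one, and then $dn(x)$ is no longer proportional to $v_1(x)$. Hence one must choose the $\Pi_j$ carefully --- equivalently, use a genuinely non-planar closed center curve $\gamma$, which is still allowed by the Fact of Section~\ref{sec:smooth} --- so that the matched-angle condition, the orthogonality (normal-agreement) condition and the combinatorial closing-up all hold together; this is where a trigonometric compatibility system in $(N,M,\delta,R,\Pi_\bullet)$ appears, and solving it for one admissible choice and exhibiting the corresponding fundamental piece is the real work. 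I expect that, just as the working cylinder in Theorem~\ref{theo:3} had to be chiral (the non-chiral armchair cylinder of Figure~\ref{fig:standardArmchair} failing the $k_1$-direction property), a genuinely chiral, non-axisymmetric discrete torus is what is needed here.
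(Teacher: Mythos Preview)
Your proposal correctly isolates the crux --- the discrete unit normal $v_1\wedge v_2/\|v_1\wedge v_2\|$ at $\iota(x)$ must coincide with the ``radial'' direction $n_{j,l}$ --- but it stops precisely at that point: you say that ``solving it for one admissible choice \ldots\ is the real work'' and then do not carry it out. As written this is a proof sketch that ends before the main computation, so it is not yet a proof.

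More importantly, the direction you propose for closing that gap is based on a false belief. You assert that the naive torus of revolution cannot work and that one must pass to a non-planar closed center curve and a genuinely chiral mesh. The paper's proof shows the opposite: it stays on the standard smooth torus of revolution
\[
f(u,v)=\bigl((r_1\cos u+r_2)\cos v,\ (r_1\cos u+r_2)\sin v,\ r_1\sin u\bigr),\qquad \nu(u,v)=(\cos u\cos v,\cos u\sin v,\sin u),
\]
and simply places the three neighbours at parameter values $(p_1,q_1),(p_1,q_2),(p_2,q_1)$ while leaving the position $(p_0,q_0)$ of the \emph{center node} $\iota(x)=f(p_0,q_0)$ free. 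One then solves the single equation $n(x)=\nu(p_0,q_0)$ for $(p_0,q_0)$; a short trigonometric computation gives the closed formula
\[
q_0=\tfrac{q_1+q_2}{2},\qquad p_0=\arctan\!\Bigl(\tan\tfrac{p_1+p_2}{2}\,\cos\tfrac{q_1-q_2}{2}\Bigr),
\]
and with this choice $dn(x)=-\tfrac{1}{r_1}v_1(x)$ and $d'n(x)=-k_2(x)v_2(x)$ follow directly. The closed example in the paper ($p_1=0$, $p_2=\pi$, $p_0=\pi/2$, $q_i=i\pi/N$) is completely rotationally symmetric and not chiral, so your analogy with Theorem~\ref{theo:3} is misleading here.

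In short: your instinct to fix the normal-agreement condition is right, but the remedy is not to deform the center curve or the planes $\Pi_j$ --- it is to shift the \emph{center vertex} within the smooth torus to the unique parameter point where the discrete and smooth normals agree. That single adjustment is the whole proof; the compatibility system you anticipate never arises.
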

\begin{figure}[H]
\begin{center}
\includegraphics[width=1.0\linewidth]{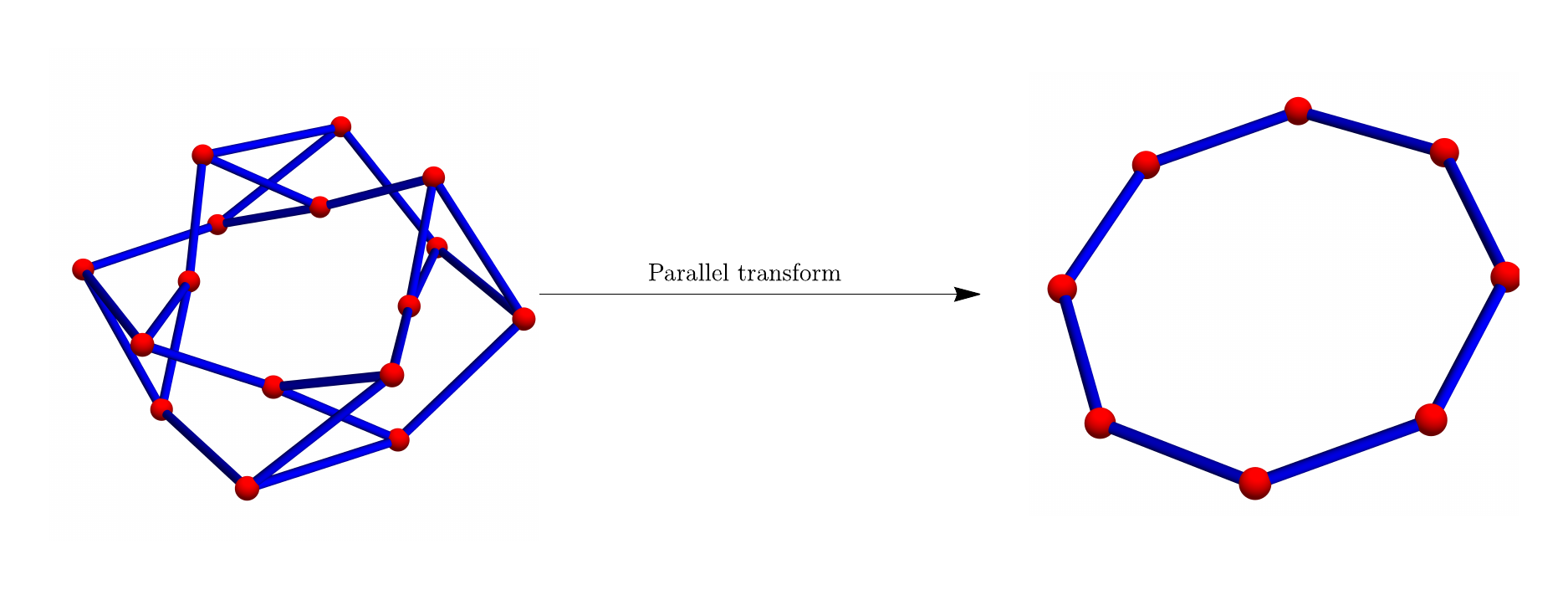}
\end{center}
\caption{Discrete CPC $1$ torus. To solve the closing conditions, we assumed all vertices $\iota(x)$ lies on $f(u,v)=((\cos u+3)\cos v, (\cos u+3)\sin v, \sin u)$ and set $p_0=\frac{\pi}{2}$, $p_1=0$, $p_2=\pi$ and $q_i=\frac{i\pi}{4}\ (i\in \mathbb{N})$ in the proof of Theorem \ref{theo:4}. Then, the resulting discrete surface has $k_1\equiv 1$ and in the both discrete principal $k_i$-directions.}\label{fig:CPCtorus}
\end{figure}
\begin{proof}
We show the recipe. Let $f(u,v)=((r_1\cos u+r_2)\cos v, (r_1\cos u+r_2)\sin v, r_1\sin u)$ be a smooth CPC torus in $\mathbb{R}^3$ for constants $r_1, r_2>0.$ Then, its unit normal vector $\nu$ is $\nu(u,v)=(\cos u\cos v, \cos u\sin v, \sin u)$. We also define vertices of a discrete CPC surface which lies on $f(u,v)$ as follows:
$$\iota(x)=f(p_0,q_0),\ \iota(x_3)=f(p_1,q_1),\ \iota(x_1)=f(p_1,q_2),\ \iota(x_2)=f(p_2,q_1),$$
where $p_1<p_0<p_2$ and $q_1<q_0<q_2.$ For the discrete principal directions, now we will consider the condition $n(x)=\nu(p_0,q_0)$. By the direct calculation, we have
\begin{eqnarray*}
v_1(x)=\iota(x_3)-\iota(x_1)=((r_1\cos p_1+r_2)(\cos q_1-\cos q_2),(r_1\cos p_1+r_2)(\sin q_1-\sin q_2),0)\ \textup{and}\\
v_2(x)=\iota(x_3)-\iota(x_2)=(r_1(\cos p_1-\cos p_2)\cos q_1,r_1(\cos p_1-\cos p_2)\sin q_1,r_1(\sin p_1-\sin p_2)),
\end{eqnarray*}
and thus $v_1(x)\times v_2(x)$ is parallel to 
$$((\sin p_1-\sin p_2)(\sin q_1-\sin q_2), -(\sin p_1-\sin p_2)(\cos q_1-\cos q_2), (\cos p_1-\cos p_2)\sin(q_1-q_2)).$$
By the above, $n(x)=\nu(p_0,q_0)$ equals to the following
\begin{eqnarray*}
&&
\begin{cases}
\tan q_0=-\displaystyle\frac{\cos q_1-\cos q_2}{\sin q_1-\sin q_2}\\
\displaystyle\frac{\tan p_0}{\cos q_0}=\displaystyle\frac{(\cos p_1-\cos p_2)\sin(q_1-q_2)}{(\sin p_1-\sin p_2)(\sin q_1-\sin q_2)}
\end{cases}\\
&\Longleftrightarrow& \therefore q_0=\frac{q_1+q_2}{2},\quad p_0=\mathrm{arctan}\left(\tan \frac{p_1+p_2}{2}\cos\frac{q_1-q_2}{2}\right).
\end{eqnarray*}
Consequently, we can achieve $n(x)=\nu(p_0,q_0)$. Using these conditions and rotational symmetry, we will get $dn(x)=-k_1v_1(x)=-\frac{1}{r_1}v_1(x)$ and $d'n(x)=-k_2(x)v_2(x)$ for some function $k_2(x)$. If we need to get a closed example, as one of simple choice, set $p_0=\frac{\pi}{2}$, $p_1=0$, $p_2=\pi$ and $q_i=\frac{i\pi}{N}\ (i, N\in \mathbb{N})$. 
\end{proof}
\section{Future directions}\label{sec:future}
In this paper, we introduced the discrete surface theory in $\mathbb{R}^3$ on a full $3$-ary oriented tree. We define the discrete principal directions on them as a discrete analogue of principal directions, and proved a relation with the center of curvatures (see Theorem \ref{theo:1}). Moreover, we also showed the existence of the discrete principal directions, even if we assume the condition of constant bond length (see Theorem \ref{theo:2}). In Section \ref{sec:examples}, we construct some interesting examples of discrete CPC surfaces by analyzing the discrete principal directions of them (see Theorem \ref{theo:3} and \ref{theo:4}).\par
   As a sequel study, we will need to analyze the cases of {\it non-constant bond length}. In the theory of nanocarbon materials, there exist small differences between bond lengths, via different types of the molecular bond. However, if we consider the setting of non-constant bond length for discrete surfaces, we have many freedoms, but some symmetries will be broken. For example, in Figure \ref{fig:FP5N}, the left is a new type of nanocarbon material, called a {\it peanut-shaped fullerene polymer} (PSFP) or a C${}_{60}$ polymer discovered in \cite{OnoeNakayamaAonoHara(2003), ShimaYoshiokaOnoe(2009)} and \cite{NodaOnoOhno(2015)}, which has non-constant bond length. 
In \cite{KMNOO(??)}, it is shown that it becomes a pre-CPC $k_1$ surface, i.e. we could regard it as a CPC $k_1$ surface except for a certain small deviation. However, as in Figure \ref{fig:FP5N}, it can not converge to an axis via parallel transforms, and there is a discrete surface, called the center-axisoid. We consider this phenomena as an important and interesting clue. Thus the relation between center-axisoids and the property of non-constant bond length should be investigated in the future.           
\begin{figure}[H]
\begin{center}
\includegraphics[width=1.0\linewidth]{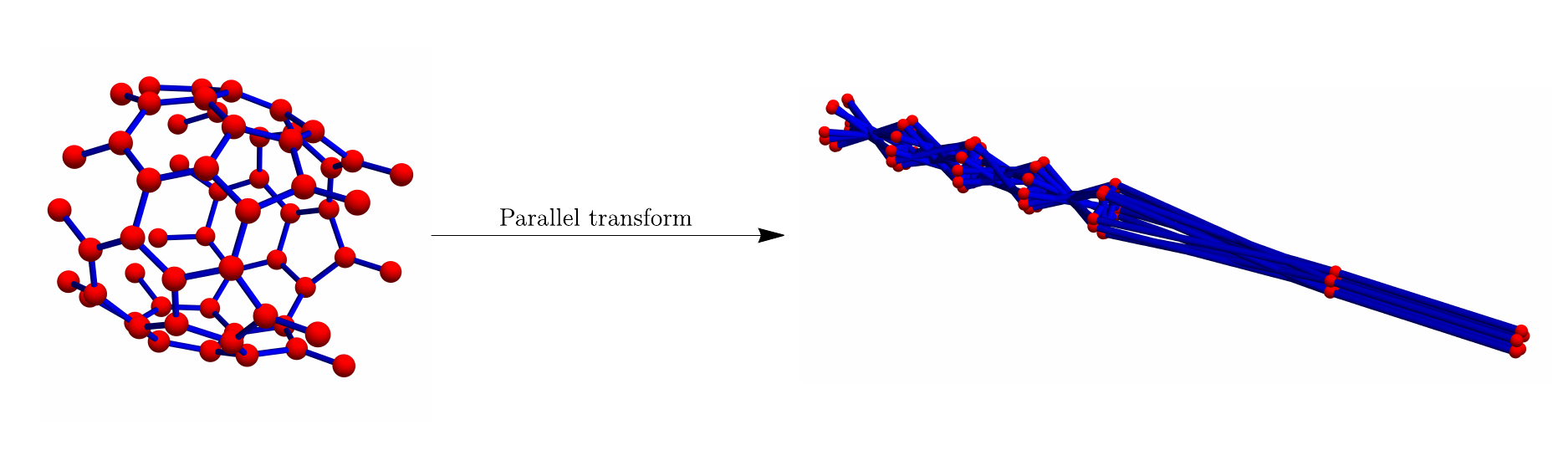}
\end{center}
\caption{Peanut-shaped fullerene polymer (PSFP) and its parallel transform; the figures show parts of periodic infinite graphs. }\label{fig:FP5N}
\end{figure}
\section*{Acknowledgement}
The authors are grateful to Professor Jun Onoe and Yusuke Noda for their valuable comments and for giving motivation for this study through collaboration \cite{KMNOO(??)}.
The first and third authors have been supported by the Grant-in-Aid for Young Scientist of Japan Society for the Promotion of Science Grant, no. 20K14312 and no. 21K13799, respectively. 
The second author has been supported by the Grant-in-Aid for Scientific Research (C) of Japan Society for the Promotion of Science Grant, no. 21K03289. This joint work was supported by Institute of Mathematics for Industry, Joint Usage/Research Center in Kyushu University. (“IMI workshop II: Geometry and Algebra in Material Science I”, September 7–8, 2020", (20200012), “IMI workshop II: Geometry and Algebra in Material Science II”, August 30-31, 2021", (20210001), and “IMI workshop I: Geometry and Algebra in Material Science III”, September 8–10, 2022", (2022a003)).
%
\bibliographystyle{abbrv}      
\bibliography{KMO1}

\end{document}